\definecolor{labelkey}{rgb}{0,0.08,0.45}
\definecolor{refkey}{rgb}{0,0.6,0.0}
\definecolor{Brown}{rgb}{0.45,0.0,0.05}
\definecolor{lime}{rgb}{0.00,0.8,0.0}
\definecolor{lblue}{rgb}{0.5,0.5,0.99}
\definecolor{lblue}{rgb}{0.8,0.85,1.00}
\definecolor{anotherblue}{rgb}{.8, .8,1}
\definecolor{violet}{rgb}{0.9,0.6,0.9}
\definecolor{greenyellow}{rgb}{0.53,0.99,0.18}
\definecolor{Lyellow}{rgb}{0.87,0.87,0.87}
\definecolor{Lgray}{rgb}{0.92,0.92,0.92}
\definecolor{Mgray}{rgb}{0.5,0.5,0.5}
\definecolor{Gold}{rgb}{0.99,0.84,0.0}
\definecolor{myblue}{rgb}{.8, .8, 1}
  \newcommand*\mybluebox[1]{%
    \colorbox{myblue}{\hspace{1em}#1\hspace{1em}}}
\newcommand{\nnn}{\ensuremath{{n\in{\mathbb N}}}}
\newcommand{\menge}[2]{\big\{{#1}~\big |~{#2}\big\}}
\newcommand{\fenv}[1]%
{\ensuremath{\,\overrightarrow{\operatorname{env}}_{#1}}}
\newcommand{\benv}[1]%
{\ensuremath{\,\overleftarrow{\operatorname{env}}_{#1}}}
\newcommand{\scal}[2]{\left\langle{#1},{#2}  \right\rangle}
\newcommand{\zeroun}{\ensuremath{\left]0,1\right[}}
\newcommand{\RR}{\ensuremath{\mathbb R}}
\newcommand{\NN}{\ensuremath{\mathbb N}}
\newcommand{\RP}{\ensuremath{\mathbb R}_+}
\newcommand{\Fix}{\ensuremath{\operatorname{Fix}}}
\newcommand{\Id}{\ensuremath{\operatorname{Id}}}
\newcommand{\Up}{\ensuremath{{{U^\perp}}}}
\newcommand{\Vp}{\ensuremath{{{V^\perp}}}}
\newcommand{\pinf}{\ensuremath{+\infty}}
\newcommand{\bx}{\ensuremath{\mathbf{x}}}
\newcommand{\bX}{\ensuremath{{\mathbf{X}}}}
\newcommand{\bU}{\ensuremath{{\mathbf{U}}}}
\newcommand{\bV}{\ensuremath{{\mathbf{V}}}}
\newcommand{\bT}{\ensuremath{{\mathbf{T}}}}
\def\ve{\varepsilon}
\newtheorem{theorem}{Theorem}[section]
\newtheorem{lemma}[theorem]{Lemma}
\newtheorem{corollary}[theorem]{Corollary}
\newtheorem{proposition}[theorem]{Proposition}
\newtheorem{definition}[theorem]{Definition}
\theoremstyle{plain}{\theorembodyfont{\rmfamily}
}
\theoremstyle{plain}{\theorembodyfont{\rmfamily}
}
\theoremstyle{plain}{\theorembodyfont{\rmfamily}
}
\theoremstyle{plain}{\theorembodyfont{\rmfamily}
}
\newtheorem{fact}[theorem]{Fact}
\theoremstyle{plain}{\theorembodyfont{\rmfamily}
}
\begin{document}

\title{\textrm{The rate of linear convergence of the 
Douglas--Rachford algorithm for subspaces
is the cosine of the Friedrichs angle}}

\author{
Heinz H.\ Bauschke\thanks{Mathematics, University of British
Columbia, Kelowna, B.C.\ V1V~1V7, Canada. E-mail:
\texttt{heinz.bauschke@ubc.ca}.},~
J.Y.\ Bello Cruz\thanks{IME, Federal University of Goias,
Goiania, G.O. 74001-970, Brazil. E-mail:
\texttt{yunier.bello@gmail.com}.},~Tran T.A. Nghia\thanks{Mathematics, University of British Columbia, Kelowna, B.C.\ V1V~1V7, Canada. E-mail: \texttt{nghia.tran@ubc.ca}.},~
Hung M.\ Phan\thanks{Mathematics, University of British Columbia, Kelowna, B.C.\ V1V~1V7, Canada. E-mail:  \texttt{hung.phan@ubc.ca}.}, ~and
Xianfu\ Wang\thanks{Mathematics, University of British Columbia,
Kelowna, B.C.\ V1V~1V7, Canada. E-mail:
\texttt{shawn.wang@ubc.ca}.}}

\date{December 20, 2013}

\maketitle \thispagestyle{fancy}

\vskip 8mm

\begin{abstract} \noindent
The Douglas--Rachford splitting algorithm is a 
classical optimization method that has found many applications. 
When specialized to two normal cone operators, it yields
an algorithm for finding a point in the intersection of
two convex sets. 
This method for solving feasibility problems 
has attracted a lot of attention due to its 
good performance even in nonconvex settings. 

In this paper, 
we consider the Douglas--Rachford algorithm
for finding a point in the intersection of two subspaces.
We prove that the method converges strongly to the projection of the
starting point onto the intersection. Moreover, 
if the sum of the two subspaces is closed,
then the convergence is linear with the rate being the cosine 
of the Friedrichs angle between the subspaces. 
Our results improve upon existing results in three ways:
First, we identify the location of the limit and thus reveal
the method as a best approximation algorithm;
second, we quantify the rate of convergence, and third,
we carry out our analysis in general (possibly
infinite-dimensional) Hilbert space. 
We also provide various examples as well as a 
comparison with the classical method of alternating projections.
\end{abstract}

{\small \noindent {\bfseries 2010 Mathematics Subject
Classification:} {Primary 49M27, 65K05, 65K10; 
Secondary 47H05, 47H09, 49M37, 90C25. 
}
}

\noindent {\bfseries Keywords:}
Douglas--Rachford splitting method, 
firmly nonexpansive,
Friedrichs angle, 
linear convergence,
method of alternating projections, 
normal cone operator, 
subspaces,
projection operator. 


\section{Introduction}

Throughout this paper,
we assume that
\begin{empheq}[box=\mybluebox]{equation}
\text{$X$ is a real Hilbert space}
\end{empheq}
with inner product $\scal{\cdot}{\cdot}$ and induced norm
$\|\cdot\|$. 
Let $U$ and $V$ be closed convex subsets of $X$ such that
$U\cap V\neq\varnothing$.
The \emph{convex feasibility problem} is to find a point in
$U\cap V$. This is a basic problem in the natural sciences and
engineering (see, e.g., \cite{bb96}, \cite{CenZen}, and \cite{Comb93}) --- 
as such, a plethora of algorithms based on the
nearest point mappings (projectors) $P_U$ and $P_V$ have been
proposed to solve it. 

One particularly popular method is the \emph{Douglas--Rachford
splitting algorithm} \cite{DougRach} which utilizes the 
the \emph{Douglas--Rachford splitting operator}
\begin{empheq}[box=\mybluebox]{equation}
\label{e:T}
T := P_V(2P_U-\Id) + \Id- P_U
\end{empheq}
and $x_0\in X$ to generate the sequence $(x_n)_\nnn$ by 
\begin{equation}
\label{e:DR}
(\forall\nnn) \quad x_{n+1} := Tx_n.
\end{equation}
While the sequence $(x_n)_\nnn$ may or may not converge to a
point in $U\cap V$, the projected (``shadow'') sequence
\begin{equation}
(P_Ux_n)_\nnn
\end{equation}
always converges (weakly) to a point in $U\cap V$
(see \cite{LM}, \cite{a-loch}, \cite{BauschkeJonFest}, \cite{BC2011}). 
The Douglas--Rachford algorithm has been applied very
successfully to various problems where $U$ and $V$ are not
necessarily convex, even though the supporting formal theory is
far from being complete (see, e.g., \cite{ABT}, \cite{JOSA}, and 
\cite{Elser}). 
Very recently, Hesse, Luke and Neumann \cite{HLN} (see also
\cite{HL}) considered
projection methods for the (nonconvex) sparse affine feasibility
problem. Their paper highlights the importance of understanding
the Douglas--Rachford algorithm for the case when $U$ and $V$
are closed \emph{subspaces} of $X$; their basic convergence
result is the following.

\begin{fact}[Hesse--Luke--Neumann]
\label{f:HLN}
{\rm (See \cite[Theorem~4.6]{HLN}.)}
Suppose that $X$ is finite-dimensional and 
$U$ and $V$ are subspaces of $X$. 
Then the sequence $(x_n)_\nnn$ generated by \eqref{e:DR} converges to a
point in $\Fix T$ with a linear rate\footnote{Recall that
$x_n\to x$ \emph{linearly} or with a \emph{linear rate}
$\gamma\in\zeroun$ if $(\gamma^{-n}\|x_n-x\|)_\nnn$ is bounded.}. 
\end{fact}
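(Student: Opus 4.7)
The plan is to exploit the crucial observation that when $U,V$ are subspaces, $P_U$ and $P_V$ are linear self-adjoint operators, so the Douglas--Rachford operator
\begin{equation*}
T \;=\; P_V(2P_U - \Id) + \Id - P_U \;=\; \tfrac{1}{2}\bigl(\Id + (2P_V-\Id)(2P_U-\Id)\bigr)
\end{equation*}
is itself a linear bounded firmly nonexpansive operator on $X$, and that $\Fix T\supseteq U\cap V\neq\emp$. In finite dimensions, linear convergence of the iterates to a fixed point will then follow from a straightforward spectral analysis of $T$.

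The first step I would carry out is to show that every eigenvalue $\lambda$ of $T$ with $|\lambda|=1$ must equal $1$. If $Tv=\lambda v$ with $v\neq 0$, then $T^n v - T^{n+1}v = (1-\lambda)\lambda^n v$ has norm equal to the constant $|1-\lambda|\,\|v\|$. But firmly nonexpansive operators on a Hilbert space are asymptotically regular, so $T^n x - T^{n+1}x\to 0$ for every $x$; this forces $\lambda=1$. The second step is to argue that the eigenvalue $1$ is semisimple: a Jordan block of size $\ge 2$ at $\lambda=1$ would make $\|T^n\|$ grow polynomially in $n$, violating $\|T^n\|\leq\|T\|^n\leq 1$.

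These two facts yield a $T$-invariant decomposition $X = \Fix T\oplus W$ (via the primary decomposition of $T$) in which the spectrum of the restriction $T|_W$ lies strictly inside the unit disk. Hence there exist $C\ge 0$ and $\rho\in\zeroun$ with $\|T^n|_W\|\le C\rho^n$ for all $\nnn$. Writing the initial point as $x_0 = y_0 + w_0$ with $y_0\in\Fix T$ and $w_0\in W$ gives $x_n = T^n x_0 = y_0 + T^n w_0$, so $x_n\to y_0\in\Fix T$ at the linear rate $\rho$.

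The only delicate point is the first step: verifying that unit-modulus eigenvalues of $T$ must equal $1$ and are semisimple. Both facts are standard consequences of asymptotic regularity and of $\|T^n\|\le 1$, and neither requires any identification of the rate. The genuinely hard work — quantifying $\rho$ as $\cos\theta_F(U,V)$ and extending the conclusion to arbitrary (possibly infinite-dimensional) Hilbert space — is precisely the contribution the remainder of the paper sets out to make.
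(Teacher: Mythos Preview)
Your argument is correct. Note, however, that the paper does not supply its own proof of this statement: Fact~\ref{f:HLN} is quoted from \cite{HLN} as background and motivation. What the paper \emph{does} prove is the much stronger Theorem~\ref{t:main} (and Corollary~\ref{c:main2}), which in finite dimensions specializes to Fact~\ref{f:HLN} since $U+V$ is then automatically closed and hence $c_F<1$.

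Compared with the paper's route to that stronger result, your approach is genuinely different. You rely only on firm nonexpansiveness and finite-dimensional spectral theory: asymptotic regularity rules out peripheral eigenvalues other than $1$, the bound $\|T^n\|\le 1$ rules out nontrivial Jordan blocks at $1$, and the primary decomposition then finishes the job. This works for \emph{any} linear firmly nonexpansive operator on a finite-dimensional space, not just the Douglas--Rachford $T$, and it does not require the complementary subspace $W$ to be $(\Fix T)^\perp$. The paper, by contrast, exploits structure specific to $T$: it establishes that $T$ is \emph{normal} (Proposition~\ref{p:union}\ref{p:union3}), which forces the orthogonal decomposition $X=\Fix T\oplus(\Fix T)^\perp$ and the exact identity $\|T^n-P_{\Fix T}\|=\|T-P_{\Fix T}\|^n$, and then computes $\|T-P_{\Fix T}\|=c_F$ directly from the formula $T=P_VP_U+P_{V^\perp}P_{U^\perp}$. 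Your argument buys generality and simplicity at the qualitative level; the paper's buys the sharp rate and the extension to infinite-dimensional Hilbert space.
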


The aim of this paper is three-fold.
We complement Fact~\ref{f:HLN} by providing the following:
\begin{itemize}
\item
We identify the limit of the shadow sequence $(P_Ux_n)$ as
$P_{U\cap V}x_0$; consequently and somewhat surprisingly, 
the Douglas--Rachford method in this
setting not only solves a feasibility problem but actually a
\emph{best approximation problem}.
\item 
We quantify the rate of convergence --- it turns out to be the cosine of the
Friedrichs angle between $U$ and $V$; moreover, our estimate is sharp. 
\item 
Our analysis is carried out in general (possibly
infinite-dimensional) Hilbert space. 
\end{itemize}

The paper is organized as follows. 
In Sections~\ref{s:aux} and \ref{s:static}, 
we collect various auxiliary results to
facilitate the proof of the main results (Theorem~\ref{t:main} and Theorem~\ref{t:main2}) 
in Section~\ref{s:main}. 
In Section~\ref{s:plane}, we analyze the Douglas--Rachford algorithm 
for two lines in the Euclidean plane.
The results obtained are used in Section~\ref{s:anex} for
an infinite-dimensional construction illustrating the lack of 
linear convergence. 
In Section~\ref{s:compare}, we compare
the Douglas--Rachford algorithm to the method of alternating
projections. 
We report on numerical experiments in Section~\ref{s:numerical},
and conclude the paper in Section~\ref{s:conclusion}. 

Notation is standard and follows largely \cite{BC2011}. 
We write $U\oplus V$ to indicate that the terms of the Minkowski sum $U+V =
\menge{u+v}{u\in U,v\in V}$ satisfy $U\perp V$.

\section{Auxiliary results}

In this section, we collect various results to ease the derivation
of the main results.

\label{s:aux}

\subsection{Firmly nonexpansive mappings}

It is well known (see \cite{LM}, \cite{EckBer}, or \cite{JOSA}) 
that the Douglas--Rachford operator $T$ (see
\eqref{e:T}) is \emph{firmly nonexpansive}, i.e.,
\begin{equation}
(\forall x\in X)(\forall y\in X)
\quad
\|Tx-Ty\|^2 + \|(\Id-T)x-(\Id-T)y\|^2 \leq \|x-y\|^2.
\end{equation}
The following result will be useful in our analysis. 

\begin{fact}
\label{f:firmiter}
{\rm (See \cite[Corollary~5.16 and Proposition~5.27]{BC2011}, or
\cite[Theorem~2.2]{BDHP}, \cite{BaiBruRei} and \cite{BruRei}.)}
Let $T\colon X\to X$ be linear and firmly nonexpansive, and let
$x\in X$. 
Then $T^nx \to P_{\Fix T}x$. 
\end{fact}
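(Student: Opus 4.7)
My plan is to prove strong convergence directly, bypassing any detour through weak convergence. The strategy rests on three pillars: (i) asymptotic regularity of the iterates, (ii) the orthogonal decomposition $X = \Fix T \oplus \overline{\ran(\Id - T)}$, and (iii) approximation from $\ran(\Id - T)$, on which asymptotic regularity immediately forces $T^n \to 0$.

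First, linearity gives $T0=0$, so specializing the firm-nonexpansiveness inequality to $y=0$ yields $\|Tw\|^2 + \|w - Tw\|^2 \leq \|w\|^2$. Applying this at $w = T^k x$ and telescoping delivers the summability estimate $\sum_{k\geq 0}\|(\Id - T)T^k x\|^2 \leq \|x\|^2$, and hence the asymptotic regularity $T^{k+1}x - T^k x \to 0$. Next I would establish $\Fix T = \Fix T^*$: for $y \in \Fix T$ a direct expansion yields $\|T^* y - y\|^2 = \|T^* y\|^2 - \|y\|^2$, and since the left side is nonnegative while $\|T^*\| \leq 1$, this forces $T^* y = y$; the reverse inclusion is symmetric. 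Combined with the general identity $\ker(\Id - T)^\perp = \overline{\ran(\Id - T^*)}$, we obtain the crucial decomposition
\begin{equation*}
(\Fix T)^\perp = \overline{\ran(\Id - T)}.
\end{equation*}

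With these pieces in place, set $p := P_{\Fix T} x$ and $v := x - p \in (\Fix T)^\perp$. Linearity and $Tp = p$ give $T^n x = p + T^n v$, so it suffices to show $T^n v \to 0$. When $v = (\Id - T) y$, we have $T^n v = T^n y - T^{n+1} y \to 0$ by asymptotic regularity. For general $v \in \overline{\ran(\Id - T)}$, pick $v_k = (\Id - T) y_k \to v$ and use nonexpansiveness to write $\|T^n v\| \leq \|v - v_k\| + \|T^n v_k\|$; a routine $\varepsilon/2$ argument then delivers $T^n v \to 0$.

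I expect the main obstacle to be the identity $\Fix T = \Fix T^*$ and the orthogonal decomposition it produces. Without it, one has only $(\Fix T)^\perp = \overline{\ran(\Id - T^*)}$, which is the wrong range on which to exploit asymptotic regularity of $T$ itself. Everything else is bookkeeping once linearity and firm nonexpansiveness are combined; note in particular that firm nonexpansiveness is used through the telescoping step (to get asymptotic regularity) and through the fact that $T^*$ is nonexpansive (to close the symmetry argument), but we never need $T^*$ to be firmly nonexpansive.
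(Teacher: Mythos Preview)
Your proof is correct. Note that the paper does not prove this statement at all: it is stated as a \emph{Fact} with citations to \cite{BC2011}, \cite{BDHP}, \cite{BaiBruRei}, and \cite{BruRei}, and no argument is given in the paper itself. Your argument---asymptotic regularity from firm nonexpansiveness, the identity $\Fix T=\Fix T^*$ for linear nonexpansive $T$, the resulting orthogonal decomposition $X=\Fix T\oplus\overline{\ran(\Id-T)}$, and the density/$\varepsilon$-approximation step---is precisely the standard route taken in those references (in particular \cite[Theorem~2.2]{BDHP} and \cite[Proposition~5.27 combined with Corollary~5.16]{BC2011}). One small clarification worth making explicit: you pass from $\ker(\Id-T)^\perp=\overline{\ran(\Id-T^*)}$ to $(\Fix T)^\perp=\overline{\ran(\Id-T)}$ by applying the same closed-range identity to $\Id-T^*$ and then invoking $\Fix T=\Fix T^*$, which is exactly right but deserves a sentence since it is the step where $\Fix T=\Fix T^*$ is actually used.
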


\subsection{Products of projections and the Friedrichs angle}

Unless otherwise stated, we assume from now on that 
\begin{empheq}[box=\mybluebox]{equation}
\text{$U$ and $V$ are closed subspaces of $X$.}
\end{empheq}

The proof of the following useful fact 
can be found in \cite[Lemma~9.2]{Deutsch}:
\begin{equation}
\label{e:PP}
U\subseteq V 
\;\;\Rightarrow\;\;
P_U(V)\subseteq V
\;\;\Leftrightarrow\;\;
P_VP_U=P_UP_V = P_{U\cap V}.
\end{equation}

Our main results are formulated using the notion of the Friedrichs
angle between $U$ and $V$. Let us review the definition and
provide the key results which are needed in the sequel.

\begin{definition}
The \emph{cosine of the Friedrichs angle}
between $U$ and $V$ is 
\begin{equation}
c_F := \sup \menge{\scal{u}{v}}{u\in U\cap (U\cap
V)^\perp,\; v\in V\cap (U\cap V)^\perp,\;
\|u\|\leq 1,\; \|v\|\leq 1}. 
\end{equation}
We write $c_F(U,V)$ for $c_F$ if we emphasize the subspaces
utilized. 
\end{definition}

\begin{fact}[fundamental properties of the Friedrichs angle]
\label{f:angle}
Let $n\in\{1,2,3,\ldots\}$. Then the following hold:
\begin{enumerate}
\item 
\label{f:anglepos}
$U+V$ is closed
$\Leftrightarrow$ 
$c_F < 1$.
\item 
\label{f:angleSolmon}
$c_F(U,V) =c_F(V,U)= c_F(U^\perp,V^\perp)$.
\item
\label{f:angle3}
{$c_F=\|P_VP_U-P_{U\cap V}\|=\|P_{V^\perp}P_{U^\perp}-P_{U^\perp\cap V^\perp}\|$}.
\item
\label{f:angleAKW}
{\rm \textbf{(Aronszajn--Kayalar--Weinert)}}
$\|(P_VP_U)^n-P_{U\cap V}\| = c_F^{2n-1}$. 
\end{enumerate}
\end{fact}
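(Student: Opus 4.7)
My plan is to prove the four statements in the order (iii), (iv), (ii), (i), with the first equality in (iii) serving as the technical workhorse and everything else flowing from it by operator-algebraic bookkeeping. The starting observation is that the orthogonal decompositions $P_U = P_{U\cap V} + P_{U_1}$ and $P_V = P_{U\cap V} + P_{V_1}$, where $U_1 := U\cap (U\cap V)^\perp$ and $V_1 := V\cap (U\cap V)^\perp$, together with $P_{V_1} P_{U\cap V} = 0 = P_{U\cap V} P_{U_1}$, yield
\begin{equation*}
P_V P_U - P_{U\cap V} = P_{V_1} P_{U_1}.
\end{equation*}
The same calculation applied to the pair $(U^\perp, V^\perp)$, using $(U^\perp \cap V^\perp)^\perp = \overline{U+V}$, gives $P_{V^\perp} P_{U^\perp} - P_{U^\perp \cap V^\perp} = P_{V_2} P_{U_2}$, where $U_2 := U^\perp \cap \overline{U+V}$ and $V_2 := V^\perp \cap \overline{U+V}$.

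For the first equality in (iii), I would bound $\|P_{V_1} P_{U_1}\|$ in both directions by duality: for unit $x, y \in X$, $\langle P_{V_1} P_{U_1} x, y\rangle = \langle P_{U_1} x, P_{V_1} y\rangle$ is the inner product of $P_{U_1} x \in U_1$ with $P_{V_1} y \in V_1$ (both of norm at most $1$), hence is bounded by $c_F$; conversely, for unit $u \in U_1$, $v \in V_1$, the identity $\langle (P_V P_U - P_{U\cap V}) u, v\rangle = \langle u, v\rangle$ forces $c_F \leq \|P_V P_U - P_{U\cap V}\|$. For (iv), I would first prove by induction that $(P_V P_U)^n = P_{U\cap V} + (P_{V_1} P_{U_1})^n$, which is immediate from $P_{U\cap V}(P_{V_1} P_{U_1}) = 0 = (P_{V_1} P_{U_1}) P_{U\cap V}$. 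Then the $C^*$-identity combined with the telescoping $P_{V_1}^2 = P_{V_1}$ produces
\begin{equation*}
\|(P_{V_1} P_{U_1})^n\|^2 = \|(P_{U_1} P_{V_1})^n(P_{V_1} P_{U_1})^n\| = \|(P_{U_1} P_{V_1} P_{U_1})^{2n-1}\|.
\end{equation*}
Since $P_{U_1} P_{V_1} P_{U_1} = T^* T$ with $T := P_{V_1} P_{U_1}$ is positive self-adjoint, norms and powers commute: $\|(P_{U_1} P_{V_1} P_{U_1})^{2n-1}\| = \|P_{U_1} P_{V_1} P_{U_1}\|^{2n-1} = \|T\|^{2(2n-1)} = c_F^{4n-2}$, yielding $\|(P_V P_U)^n - P_{U\cap V}\| = c_F^{2n-1}$.

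For (ii), the symmetry $c_F(U,V) = c_F(V,U)$ is immediate from the definition. For $c_F(U,V) = c_F(U^\perp, V^\perp)$, combining (iii) applied to both pairs reduces the claim to $\|P_{V_1} P_{U_1}\| = \|P_{V_2} P_{U_2}\|$, where both operators live inside $Y := \overline{U+V} \cap (U\cap V)^\perp$ and, by construction, $(U_2, V_2)$ are the orthogonal complements of $(U_1, V_1)$ within $Y$. I expect this to be the main obstacle: it is the classical principle that the largest principal angle between two subspaces coincides with that between their orthogonal complements, and I would establish it through the spectrum of the self-adjoint operator $P_{U_1} - P_{V_1}$ on $Y$, which consists of $\pm\sin\theta_\alpha$ (the sines of the principal angles) and is invariant under the involution $(U_1, V_1) \leftrightarrow (U_2, V_2)$. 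Finally, (i) follows from (iii) together with the classical criterion of Deutsch \cite{Deutsch}: $U+V$ closed $\Leftrightarrow$ $U_1 + V_1$ closed inside $Y$ $\Leftrightarrow$ the infimum of angles between nonzero vectors of $U_1$ and $V_1$ is positive $\Leftrightarrow$ $c_F < 1$.
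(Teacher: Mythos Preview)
Your proposal is correct, and it does considerably more than the paper: the paper's ``proof'' of this Fact consists entirely of citations (to \cite{Maratea} for \ref{f:anglepos} and \ref{f:angleSolmon}, to \cite[Lemma~9.5(7)]{Deutsch} for \ref{f:angle3}, and to \cite[Theorem~9.31]{Deutsch} for \ref{f:angleAKW}), whereas you supply a self-contained operator-algebraic argument. Your reduction $P_VP_U-P_{U\cap V}=P_{V_1}P_{U_1}$ and the duality bound for \ref{f:angle3} are standard and clean; your proof of \ref{f:angleAKW} via the $C^*$-identity and the telescoping $(P_{U_1}P_{V_1})^n(P_{V_1}P_{U_1})^n=(P_{U_1}P_{V_1}P_{U_1})^{2n-1}$ together with $\|S^{2n-1}\|=\|S\|^{2n-1}$ for the positive self-adjoint $S=P_{U_1}P_{V_1}P_{U_1}$ is particularly slick and arguably more transparent than the inductive arguments in \cite{KW} or \cite{Deutsch}.

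One caveat on \ref{f:angleSolmon}: you correctly identify this as the hardest piece and reach for the Halmos two-projections structure. Be aware, though, that the bare observation ``the spectrum of $P_{U_1}-P_{V_1}$ is invariant under $(U_1,V_1)\leftrightarrow(U_2,V_2)$'' only gives $\|P_{U_1}-P_{V_1}\|=\|P_{U_2}-P_{V_2}\|$ directly (a statement about sines), not the equality of $\|P_{V_1}P_{U_1}\|$ and $\|P_{V_2}P_{U_2}\|$ (a statement about cosines) that you actually need. You do need the full block decomposition---on each generic $2$-dimensional invariant block the complements of two lines meet at the same angle as the lines themselves, and on the $1$-dimensional blocks (coming from $U_1\cap V_1^\perp$ or $U_1^\perp\cap V_1$) both products vanish---so make that step explicit. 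Alternatively, the references the paper cites (Solmon's theorem as presented in \cite{Maratea} or \cite[Theorem~9.35]{Deutsch}) give a more elementary route via direct norm estimates that avoids spectral theory altogether; either path is fine, but your sketch as written slightly undersells the work required at that point.
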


\begin{proof}
\ref{f:anglepos}:
See \cite[Theorem~13]{Maratea}
\ref{f:angleSolmon}:
See \cite[Theorem~16]{Maratea}
\ref{f:angle3}:
{See \cite[Lemma~9.5(7)]{Deutsch} and \ref{f:angleSolmon} above. }
\ref{f:angleAKW}: 
See \cite[Theorem~9.31]{Deutsch} (or the original works
\cite{Aron} and \cite{KW}). 
\end{proof}

Note that Fact~\ref{f:angle}\ref{f:angleSolmon}\&\ref{f:angle3} yields
\begin{equation}
\label{e:cF0}
c_F = 0
\quad\Leftrightarrow\quad
P_VP_U = P_UP_V = P_{U\cap V}.
\end{equation}

The classical Fact~\ref{f:angle}\ref{f:angleAKW} deals with 
\emph{even} powers of alternating projectors. 
We complement this result by deriving the counterpart for
\emph{odd} powers. 

\begin{lemma}[odd powers of alternating projections]
\label{l:main} 
Let $n\in\{1,2,3,\ldots\}$. 
Then
\begin{equation}
\label{l:eq}
\|P_U(P_VP_U)^n-P_{U\cap V}\|=c_F^{2n}.
\end{equation}
\end{lemma}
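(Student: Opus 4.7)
The plan is to reduce the assertion to the computation of the operator norm of a single self-adjoint operator. My first move is to collapse the odd product: iterated use of $P_U^2 = P_U$ gives
\begin{equation*}
P_U(P_VP_U)^n \;=\; (P_UP_VP_U)^n \;=\; M^n,
\qquad M \;:=\; P_UP_VP_U,
\end{equation*}
so the target becomes $\|M^n - P_{U\cap V}\| = c_F^{2n}$, involving only the self-adjoint operator $M$ and the projection $P_{U\cap V}$.

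Next I would exploit the elementary fact that $U\cap V$ is fixed by both $P_U$ and $P_V$, yielding the commutation identities
\[
M\,P_{U\cap V} \;=\; P_{U\cap V}\,M \;=\; P_{U\cap V} \;=\; P_{U\cap V}^{\,2}.
\]
A binomial expansion then telescopes: every mixed power $M^{n-k}P_{U\cap V}^{\,k}$ with $k\ge 1$ collapses to $P_{U\cap V}$, and the alternating sum $(1-1)^n=0$ leaves only a single $-P_{U\cap V}$ remaining. Thus
\[
(M - P_{U\cap V})^n \;=\; M^n - P_{U\cap V}.
\]
Since $M - P_{U\cap V}$ is self-adjoint, the standard identity $\|A^n\|=\|A\|^n$ for self-adjoint $A$ (an iterated consequence of the $C^*$-identity $\|A\|^2=\|A^*A\|$) reduces the task to computing $\|M - P_{U\cap V}\|$.

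For that last computation I would verify the factorization
\begin{equation*}
M - P_{U\cap V} \;=\; (P_UP_V - P_{U\cap V})\,(P_VP_U - P_{U\cap V})
\end{equation*}
by direct expansion, using $P_V^2=P_V$ together with the facts that $P_{U\cap V}$ absorbs both $P_U$ and $P_V$ and that $P_{U\cap V}^{\,2}=P_{U\cap V}$. Since $(P_UP_V - P_{U\cap V})^* = P_VP_U - P_{U\cap V}$, this factorization is of the $C^*$-form $AA^*$, whence $\|M - P_{U\cap V}\| = \|P_UP_V - P_{U\cap V}\|^2 = c_F^2$ by Fact~\ref{f:angle}\ref{f:angle3}. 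Combining these ingredients gives $\|M^n - P_{U\cap V}\| = (c_F^2)^n = c_F^{2n}$, as claimed.

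The main technical obstacle is spotting and verifying this $AA^*$ factorization of $M - P_{U\cap V}$; the bookkeeping with the cross terms involving $P_{U\cap V}$ requires some care, but once seen it provides a clean bridge to Fact~\ref{f:angle}\ref{f:angle3}. An alternative route that avoids the binomial manipulation is to decompose $X = (U\cap V)\oplus(U\cap V)^\perp$, observe that both summands are $M$-invariant, note that $M - P_{U\cap V}$ vanishes on the first summand and equals $M$ on the second (where $M$ is positive self-adjoint), and then read off $\|(M-P_{U\cap V})^n\| = \|M-P_{U\cap V}\|^n$ directly.
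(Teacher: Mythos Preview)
Your proof is correct and takes a genuinely different route from the paper's. The paper argues by a two-sided sandwich: it factors $P_U(P_VP_U)^n - P_{U\cap V} = \big((P_UP_V)^n - P_{U\cap V}\big)\big(P_VP_U - P_{U\cap V}\big)$ and then invokes the Aronszajn--Kayalar--Weinert formula $\|(P_UP_V)^n - P_{U\cap V}\| = c_F^{2n-1}$ (Fact~\ref{f:angle}\ref{f:angleAKW}) together with Fact~\ref{f:angle}\ref{f:angle3} to obtain the upper bound $c_F^{2n}$; a reverse factorization through $(P_UP_V)^{n+1} - P_{U\cap V}$ gives the matching lower bound after dividing by $c_F$, which forces a separate treatment of the case $c_F = 0$. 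Your argument instead exploits the self-adjointness of $M = P_UP_VP_U$: the identity $(M - P_{U\cap V})^n = M^n - P_{U\cap V}$ reduces everything to the single $n=1$ computation, and the $AA^*$ factorization $M - P_{U\cap V} = (P_UP_V - P_{U\cap V})(P_VP_U - P_{U\cap V})$ delivers the exact value $c_F^2$ via the $C^*$-identity and Fact~\ref{f:angle}\ref{f:angle3} alone. Your approach is more self-contained (it does not rely on the even-power result Fact~\ref{f:angle}\ref{f:angleAKW}) and yields equality in one stroke without a case distinction; the paper's approach, on the other hand, makes the relationship between the odd and even powers explicit and stays closer to the classical Aronszajn literature.
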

\begin{proof}
If $c_F=0$, then the conclusion is clear from \eqref{e:cF0}.
We thus assume that $c_F>0$. 
By \eqref{e:PP}, 
\begin{subequations}
\begin{align}
\big((P_UP_V)^n-P_{U\cap V}\big)\big(P_VP_U-P_{U\cap V}\big)&=(P_UP_V)^nP_VP_U-(P_UP_V)^nP_{U\cap V}-P_{U\cap V}P_VP_U+P_{U\cap V}^2\\
&=(P_UP_V)^nP_U-P_{U\cap V}-P_{U\cap V}+P_{U\cap V}\\
&=P_U(P_VP_U)^n-P_{U\cap V}.
\end{align}
\end{subequations}
It thus follows from Fact~\ref{f:angle}\ref{f:angleAKW}\&\ref{f:angle3} that
\begin{equation}\label{lux1}
\|P_U(P_VP_U)^n-P_{U\cap V}\|\le \|(P_UP_V)^n-P_{U\cap V}\|\cdot\|P_VP_U-P_{U\cap V}\|=c_F^{2n}. 
\end{equation}
Since 
$(P_U(P_VP_U)^n-P_{U\cap V})(P_UP_V-P_{U\cap
V})=(P_UP_V)^{n+1}-P_{U\cap V}$,
we obtain from 
Fact~\ref{f:angle}\ref{f:angleAKW}\&\ref{f:angle3} that 
\begin{subequations}
\begin{align}
c_F^{2n+1}&=\|(P_UP_V)^{n+1}-P_{U\cap V}\|
\leq \|P_U(P_VP_U)^{n}-P_{U\cap V}\|\cdot\|P_UP_V-P_{U\cap
V}\|\\
&=\|P_U(P_VP_U)^n-P_{U\cap V}\|c_F. 
\end{align}
\end{subequations}
Since $c_F>0$, we obtain
$c_F^{2n} \leq \|P_U(P_VP_U)^n-P_{U\cap V}\|$.
Combining with \eqref{lux1}, we deduce \eqref{l:eq}. 
\end{proof}

\section{Basic properties of the Douglas--Rachford splitting operator}

\label{s:static}

We recall that the Douglas--Rachford operator is defined by 
\begin{empheq}[box=\mybluebox]{equation}
\label{e:T2}
T = T_{V,U} := P_V(2P_U-\Id) + \Id- P_U.
\end{empheq}

For future reference, we record the following consequence of
Fact~\ref{f:firmiter}.

\begin{corollary}
\label{c:jammery}
Let $x\in X$.
Then $T^nx \to P_{\Fix T}x$.
\end{corollary}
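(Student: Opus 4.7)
The plan is essentially to verify that the Douglas--Rachford operator $T$ satisfies the two hypotheses of Fact~\ref{f:firmiter} (linearity and firm nonexpansiveness) and then simply invoke that fact. Since this is posed as a corollary rather than as a substantive result, I expect the proof to be a two-line application, with no real obstacle.

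First I would observe that the standing assumption of this section is that $U$ and $V$ are closed subspaces of $X$; hence the projectors $P_U$ and $P_V$ are \emph{linear} continuous operators on $X$. Looking at the defining formula
\begin{equation*}
T = P_V(2P_U - \Id) + \Id - P_U,
\end{equation*}
we see that $T$ is built from $P_U$, $P_V$, and $\Id$ using addition, scalar multiplication, and composition, so $T$ is linear.

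Next I would recall the statement of firm nonexpansiveness of the Douglas--Rachford operator that was already noted right at the beginning of Section~\ref{s:aux} (with citations to \cite{LM}, \cite{EckBer}, \cite{JOSA}): for any closed convex $U,V$, the operator $T$ of \eqref{e:T2} is firmly nonexpansive on $X$. In particular this applies in the subspace setting at hand.

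With $T$ linear and firmly nonexpansive, Fact~\ref{f:firmiter} applies directly and yields $T^n x \to P_{\Fix T} x$ for every $x \in X$, which is the desired conclusion. The only thing worth flagging is that $\Fix T$ is nonempty: this is automatic here since $U \cap V \neq \varnothing$ (indeed $0 \in U \cap V \subseteq \Fix T$), so $P_{\Fix T}$ is well defined.
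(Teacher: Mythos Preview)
Your proposal is correct and matches the paper's approach exactly: the paper presents this corollary simply as ``the following consequence of Fact~\ref{f:firmiter},'' relying on the linearity of $T$ (from the subspace assumption) and its firm nonexpansiveness (recorded at the start of Section~\ref{s:aux}). Your added remark that $0\in U\cap V\subseteq\Fix T$ ensures $P_{\Fix T}$ is well defined is a sensible clarification, though the paper leaves this implicit.
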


It will be very useful to work with reflectors, which we define
next.

\begin{definition}[reflector]
The \emph{reflector} associated with $U$ is 
\begin{empheq}[box=\mybluebox]{equation}
\label{e:defR}
R_U := 2P_U - \Id = P_U - P_{U^\perp}. 
\end{empheq}
\end{definition}

The following simple yet useful result is easily verified.

\begin{proposition}
\label{p:easyR}
The reflector $R_U$ is a surjective isometry with
\begin{equation}
R_U^* = R_U^{-1} = -R_{U^\perp} = R_U.
\end{equation}
\end{proposition}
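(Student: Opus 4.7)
The plan is to lean on three standard properties of orthogonal projection onto a closed subspace: $P_U$ is linear, $P_U^*=P_U$, $P_U^2=P_U$, and the orthogonal decomposition $\Id = P_U + P_{U^\perp}$. Every clause of the proposition will fall out of these by direct algebraic manipulation; the main ``obstacle'' is really just to do the bookkeeping in a clean order so the various equalities chain together.

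First I would establish the two identities $R_U^*=R_U$ and $R_U^2=\Id$. Self-adjointness follows from $(2P_U-\Id)^* = 2P_U^*-\Id = 2P_U-\Id$, using that $\Id$ is self-adjoint and $P_U$ is self-adjoint because $U$ is a closed subspace. For the involution property, I expand
\begin{equation*}
R_U^2 = (2P_U-\Id)^2 = 4P_U^2 - 4P_U + \Id = \Id,
\end{equation*}
where the second equality uses idempotency $P_U^2=P_U$. From $R_U^2=\Id$ I immediately get that $R_U$ is a bijection of $X$ onto itself with $R_U^{-1}=R_U$, which gives both surjectivity and the middle equality $R_U^*=R_U^{-1}$ when combined with the previous paragraph (since then $R_U^*R_U = R_U^2 = \Id$).

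Next I would verify the isometry property. For any $x\in X$,
\begin{equation*}
\|R_Ux\|^2 = \scal{R_Ux}{R_Ux} = \scal{R_U^*R_Ux}{x} = \scal{R_U^2x}{x} = \scal{x}{x} = \|x\|^2,
\end{equation*}
so $R_U$ preserves the norm; combined with surjectivity this makes it a surjective isometry.

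Finally, for the equality $-R_{U^\perp}=R_U$ I substitute the second form in \eqref{e:defR}, or equivalently exploit $P_{U^\perp} = \Id - P_U$:
\begin{equation*}
-R_{U^\perp} = -(2P_{U^\perp}-\Id) = \Id - 2(\Id - P_U) = 2P_U - \Id = R_U.
\end{equation*}
This closes the chain $R_U^* = R_U^{-1} = -R_{U^\perp} = R_U$ and completes the proof. No step here is genuinely hard; the only thing to be careful about is making sure that self-adjointness is proved before invoking $R_U^*R_U = \Id$ for the isometry conclusion.
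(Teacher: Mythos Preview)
Your proof is correct and is exactly the kind of straightforward verification the paper has in mind; the paper itself omits the argument entirely, stating only that the result ``is easily verified.'' Your ordering (self-adjointness, then $R_U^2=\Id$, then isometry via $R_U^*R_U=\Id$, then the $-R_{U^\perp}$ identity) is clean and complete.
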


We now record several reformulations of $T$ and
$T^*$ which we will use repeatedly in the paper without
explicitly mentioning it. 

\begin{proposition}
\label{p:otherT}
The following hold:
\begin{enumerate}
\item
\label{p:otherT1}
$T = \tfrac{1}{2}\Id + \tfrac{1}{2}R_VR_U = 
P_VR_U + \Id-P_U
= P_VP_U + P_{V^\perp}P_{U^\perp}$. 
\item
\label{p:otherT2}
$T^* = P_UP_V + P_{U^\perp}P_{V^\perp} = T_{U,V} = T^*_{V,U}$. 
\item
\label{p:otherT3}
$T_{V,U} = T_{V^\perp,U^\perp}$.
\end{enumerate}
\end{proposition}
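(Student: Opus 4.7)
The plan is to establish part \ref{p:otherT1} first by straightforward algebraic manipulation; then parts \ref{p:otherT2} and \ref{p:otherT3} will follow almost immediately from \ref{p:otherT1} together with Proposition~\ref{p:easyR}.

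For \ref{p:otherT1}, the identity $T = P_VR_U + \Id - P_U$ is nothing but the defining equation \eqref{e:T2} rewritten using the definition $R_U = 2P_U - \Id$. To obtain $T = \tfrac{1}{2}\Id + \tfrac{1}{2}R_VR_U$, I expand $R_VR_U = (2P_V-\Id)(2P_U-\Id) = 4P_VP_U - 2P_V - 2P_U + \Id$, which upon substitution collapses to $\Id + 2P_VP_U - P_V - P_U$, the same expression one gets by expanding $P_V(2P_U-\Id) + \Id - P_U$. Finally, to get $T = P_VP_U + P_{V^\perp}P_{U^\perp}$, I use $P_{V^\perp} = \Id - P_V$ and $P_{U^\perp} = \Id - P_U$, so that $P_{V^\perp}P_{U^\perp} = \Id - P_U - P_V + P_VP_U$, and adding $P_VP_U$ recovers the same expression $\Id + 2P_VP_U - P_V - P_U$.

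For \ref{p:otherT2}, I take adjoints in the form $T = P_VP_U + P_{V^\perp}P_{U^\perp}$ from \ref{p:otherT1} and use the fact that orthogonal projectors onto closed subspaces are self-adjoint, yielding $T^* = P_UP_V + P_{U^\perp}P_{V^\perp}$. This is precisely the form that \ref{p:otherT1} (applied with the roles of $U$ and $V$ exchanged) gives for $T_{U,V}$, so $T^* = T_{U,V}$, and the notation $T^*_{V,U} = T^*$ is immediate.

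For \ref{p:otherT3}, the cleanest route is to use the compact form $T_{V,U} = \tfrac{1}{2}\Id + \tfrac{1}{2}R_VR_U$ from \ref{p:otherT1} and the identity $R_{W^\perp} = -R_W$ from Proposition~\ref{p:easyR}. Then $T_{V^\perp,U^\perp} = \tfrac{1}{2}\Id + \tfrac{1}{2}R_{V^\perp}R_{U^\perp} = \tfrac{1}{2}\Id + \tfrac{1}{2}(-R_V)(-R_U) = \tfrac{1}{2}\Id + \tfrac{1}{2}R_VR_U = T_{V,U}$. There is no real obstacle here; the only ``hard'' part is choosing the right representation (the reflector form) to make \ref{p:otherT3} transparent, while the sum-of-projections form makes the adjoint computation in \ref{p:otherT2} transparent.
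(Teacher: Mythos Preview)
Your proof is correct and follows essentially the same approach as the paper's: the paper's proof of \ref{p:otherT1} simply says ``Expand and use the linearity of $P_U$ and $P_V$,'' and for \ref{p:otherT2}\&\ref{p:otherT3} it just says ``This follows from \ref{p:otherT1}.'' You have supplied the explicit computations the paper omits, including the use of Proposition~\ref{p:easyR} for \ref{p:otherT3}; an equally direct route to \ref{p:otherT3} would be to use the sum-of-projections form from \ref{p:otherT1} and observe $T_{V^\perp,U^\perp} = P_{V^\perp}P_{U^\perp} + P_VP_U = T_{V,U}$.
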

\begin{proof}
\ref{p:otherT1}: Expand and use the linearity of $P_U$ and $P_V$.
\ref{p:otherT2}\&\ref{p:otherT3}: 
This follows from \ref{p:otherT1}. 
\end{proof}

The next result highlights the importance of the reflectors
when traversing between $T$ and $T^*$. 
Item~\ref{p:union3+} was observed previously in 
\cite[Remark~4.1]{BT13jota}. 

\begin{proposition}
\label{p:union}
The following hold:
\begin{enumerate}
\item
\label{p:union1}
$R_UT^*=TR_U = T^*R_V=R_VT=P_U+P_V-\Id$.
\item
\label{p:union2}
$T^*(R_VR_U)=(R_VR_U)T^*=T$ and
$T(R_UR_V)=(R_UR_V)T=T^*$.
\item
\label{p:union3}
$TT^*=T^*T$, i.e., $T$ is normal. 
\item
\label{p:union3+}
$2TT^* = T+T^*$. 
\item
\label{p:union3++}
$TT^*$ is firmly nonexpansive and self-adjoint. 
\item
\label{p:union4}
$TT^* = 
P_VP_UP_V + P_{V^\perp}P_{U^\perp}P_{V^\perp}
= 
P_VP_U+P_UP_V-P_U-P_V+\Id 
= 
P_UP_VP_U + P_{U^\perp}P_{V^\perp}P_{U^\perp}$.
\end{enumerate}
\end{proposition}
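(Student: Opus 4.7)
The plan is to establish all six items by algebraic manipulation, exploiting the compact representations in Proposition~\ref{p:otherT} (especially $T = \tfrac12(\Id + R_V R_U) = P_V P_U + P_{V^\perp} P_{U^\perp}$ together with the analogues for $T^*$), the involution $R_U^2 = R_V^2 = \Id$ from Proposition~\ref{p:easyR}, and the orthogonality $P_U P_{U^\perp} = P_{U^\perp} P_U = 0$ (and likewise for $V$). Once (i) is in hand, every subsequent item falls out almost mechanically.

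For (i), I would expand $R_U T^* = (P_U - P_{U^\perp})(P_U P_V + P_{U^\perp} P_{V^\perp})$ directly; two of the four cross terms vanish by orthogonality, leaving $P_U P_V - P_{U^\perp} P_{V^\perp}$, which after substituting $P_{U^\perp} = \Id - P_U$ and $P_{V^\perp} = \Id - P_V$ collapses to $P_U + P_V - \Id$. The remaining three equalities in (i) are obtained either by the same three-line computation or by taking adjoints and invoking Proposition~\ref{p:otherT}\ref{p:otherT2}\&\ref{p:otherT3}. Item (ii) is then immediate from (i): $R_V T = R_U T^*$ left-multiplied by $R_V$ gives $T = (R_V R_U) T^*$, and $T R_U = T^* R_V$ right-multiplied by $R_U$ gives $T = T^*(R_V R_U)$; the twin identities involving $R_U R_V$ follow by taking adjoints.

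For (iii) and (iv), the cleanest route is to expand $4TT^* = (\Id + R_V R_U)(\Id + R_U R_V)$ and $4T^*T = (\Id + R_U R_V)(\Id + R_V R_U)$. In each case $R_U^2 = \Id$ (or $R_V^2 = \Id$) kills the quartic term, and both products simplify to the same expression $2\Id + R_V R_U + R_U R_V$; this simultaneously establishes normality (iii) and, by comparison with $T + T^* = \Id + \tfrac12(R_V R_U + R_U R_V)$, also yields $2TT^* = T + T^*$, which is (iv). Item (v) is then a corollary: self-adjointness of $TT^*$ is immediate from (iii), while (iv) presents $TT^* = \tfrac12(T + T^*)$ as a convex combination of two firmly nonexpansive operators ($T$ as recalled in Section~\ref{s:aux}, and $T^* = T_{U,V}$ as a Douglas--Rachford operator in its own right by Proposition~\ref{p:otherT}\ref{p:otherT2}), and firm nonexpansiveness is preserved under convex combinations.

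Finally, for (vi), I would expand $TT^* = (P_V P_U + P_{V^\perp} P_{U^\perp})(P_U P_V + P_{U^\perp} P_{V^\perp})$; the two cross terms vanish because they contain $P_U P_{U^\perp} = 0$, yielding the first formula $P_V P_U P_V + P_{V^\perp} P_{U^\perp} P_{V^\perp}$. Expanding $P_{V^\perp} P_{U^\perp} P_{V^\perp} = (\Id - P_V)(\Id - P_U)(\Id - P_V)$ and simplifying with $P_V^2 = P_V$ collapses this to the middle form $\Id - P_U - P_V + P_U P_V + P_V P_U$, which is visibly symmetric under $U \leftrightarrow V$; the third formula then follows either by re-running the same expansion with the roles of $U$ and $V$ swapped, or equivalently by invoking (iii) together with $T^* = T_{U,V}$ to apply the first formula to $T^*(T^*)^*$ in the swapped variables. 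There is no serious obstacle here --- the argument is essentially bookkeeping --- but the one conceptual point worth flagging is the observation, used in (v), that $T^*$ is itself a Douglas--Rachford operator, which is what allows the firm nonexpansiveness of $TT^*$ to be read off immediately from (iv).
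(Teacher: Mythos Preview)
Your proposal is correct and follows essentially the same route as the paper: identical use of Proposition~\ref{p:otherT} and Proposition~\ref{p:easyR} for \ref{p:union1}, \ref{p:union2}, \ref{p:union3+}, \ref{p:union3++}, and \ref{p:union4}. The only cosmetic difference is that the paper obtains \ref{p:union3} from \ref{p:union2} via $TT^* = T^*(R_VR_U)(R_UR_V)T = T^*T$, whereas you derive \ref{p:union3} together with \ref{p:union3+} by expanding both $4TT^*$ and $4T^*T$ and observing they coincide; either way is immediate.
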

\begin{proof}
\ref{p:union1}: 
Indeed, 
using Proposition~\ref{p:otherT}\ref{p:otherT1}\&\ref{p:otherT2}, 
we see that 
\begin{subequations}
\begin{align}
TR_U &=(P_VP_U+P_{V^\perp}P_{U^\perp})(P_U-P_{U^\perp})
= P_VP_U-P_{V^\perp}P_{U^\perp}
= P_VP_U -(\Id-P_V)(\Id-P_U)\\
&= P_U+P_V-\Id\\
&= P_UP_V -(\Id-P_U)(\Id-P_V)
= P_UP_V - P_{U^\perp}P_{V^\perp}
= (P_UP_V+P_{U^\perp}P_{V^\perp})(P_V-P_{V^\perp})\\
&= T^*R_V
\end{align}
\end{subequations}
is \emph{self-adjoint}. 
Hence 
$R_UT^* = (TR_U)^* = TR_U = T^*R_V = (R_VT)^* = R_VT$ 
by Proposition~\ref{p:easyR}. 

\ref{p:union2}: Clear from \ref{p:union1}. 

\ref{p:union3}: Using \ref{p:union2},
we obtain 
$TT^* = T^*(R_VR_U)(R_UR_V)T = T^*T$.

\ref{p:union3+}: 
$4TT^* = (\Id+R_VR_U)(\Id+R_UR_V) = 2\Id+R_VR_U+R_UR_V
=2( (\Id+R_VR_U)/2 + (\Id+R_UR_V)/2) = 2(T+T^*)$. 

\ref{p:union3++}: 
Since $T$ and $T^*$ are firmly nonexpansive, so is their convex combination
$(T+T^*)/2$, which equals $TT^*$ by \ref{p:union3+}. 
It is clear that $TT^*$ is self-adjoint. 

\ref{p:union4}: 
It follows 
from Proposition~\ref{p:otherT}\ref{p:otherT1}\&\ref{p:otherT2} that 
$TT^* =
(P_VP_U+P_{V^\perp}P_{U^\perp})(P_UP_V+P_{U^\perp}P_{V^\perp}) = P_VP_UP_V
+ P_{V^\perp}P_{U^\perp}P_{V^\perp}$, which yields the first equality. 
Replacing $P_{U^\perp}$ and $P_{V^\perp}$ by
$\Id-P_U$ and $\Id-P_V$, respectively, followed by expanding and
simplifying results in the second equality. The last equality is  
proved analogously. 
\end{proof}

Parts of our next result were also obtained in \cite{HL} and
\cite{HLN} when $X$ is finite-dimensional. 

\begin{proposition}
\label{p:Fix}
Let $\nnn$. 
Then the following hold:
\begin{enumerate}
\item
\label{p:Fix1}
$\Fix T = \Fix T^* = \Fix T^*T = (U\cap V)\oplus(U^\perp\cap V^\perp)$.
\item
\label{p:Fix1+}
$\Fix T = U\cap V$ 
$\Leftrightarrow$
$\overline{U+V}=X$. 
\item
\label{p:Fix2}
$P_{\Fix T} = P_{U\cap V}+ P_{U^\perp\cap V^\perp}$. 
\item \label{p:Fix4} 
$P_{\Fix T}T=TP_{\Fix T}=P_{\Fix T}$.
\item 
\label{p:Fix3}
$P_UP_{\Fix T} = P_VP_{\Fix T}= P_{U\cap V}P_{\Fix T} = P_{U\cap V}
= P_{U\cap V}P_{\Fix T}T^n = P_{U\cap V}T^n$. 
\end{enumerate}
\end{proposition}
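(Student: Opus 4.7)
The plan is to prove item~\ref{p:Fix1} first, since the remaining items follow readily from it. For \ref{p:Fix1}, I would establish three equalities in turn.

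The set-theoretic identities $\Fix T=\Fix T^*=\Fix T^*T$ are consequences of $T$ being linear and (firmly) nonexpansive (the firm nonexpansiveness of $T$ follows from Proposition~\ref{p:union}\ref{p:union3++} applied in an equivalent form, or directly from the general theory recalled in Section~\ref{s:aux}). If $Tx=x$, then $\langle x,T^*x\rangle=\langle Tx,x\rangle=\|x\|^2$, which together with $\|T^*x\|\le\|x\|$ forces $\|x-T^*x\|^2\le 0$; the reverse inclusion is symmetric. Similarly, if $T^*Tx=x$, then $\|Tx\|^2=\langle T^*Tx,x\rangle=\|x\|^2$, and the firm-nonexpansiveness inequality $\|Tx\|^2+\|x-Tx\|^2\le\|x\|^2$ collapses to $Tx=x$. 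For the structural identity $\Fix T=(U\cap V)\oplus(U^\perp\cap V^\perp)$, the inclusion $\supseteq$ is a direct computation using $T=P_VP_U+P_{V^\perp}P_{U^\perp}$ from Proposition~\ref{p:otherT}\ref{p:otherT1}: if $x=a+b$ with $a\in U\cap V$ and $b\in U^\perp\cap V^\perp$, then $Tx=a+b=x$. For $\subseteq$, I would symmetrize and pass to $TT^*=P_VP_UP_V+P_{V^\perp}P_{U^\perp}P_{V^\perp}$ from Proposition~\ref{p:union}\ref{p:union4}: since $Tx=x$ implies $T^*x=x$ and hence $TT^*x=x$, projecting onto $V$ and $V^\perp$ gives $P_Vx=P_VP_UP_Vx$ and $P_{V^\perp}x=P_{V^\perp}P_{U^\perp}P_{V^\perp}x$, which force $P_Ux\in U\cap V$ and $P_{U^\perp}x\in U^\perp\cap V^\perp$; thus $x=P_Ux+P_{U^\perp}x$ lies in the claimed direct sum.

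With \ref{p:Fix1} in hand, item~\ref{p:Fix2} is immediate from the general formula for the projector onto an orthogonal direct sum. Item~\ref{p:Fix1+} follows by taking complements: $\Fix T=U\cap V$ iff $U^\perp\cap V^\perp=\{0\}$ iff $\overline{U+V}=(U^\perp\cap V^\perp)^\perp=X$. For \ref{p:Fix4}, $TP_{\Fix T}=P_{\Fix T}$ is trivial because $P_{\Fix T}$ maps into $\Fix T$, while $P_{\Fix T}T=P_{\Fix T}$ follows from $\langle Tx-x,y\rangle=\langle x,T^*y-y\rangle=0$ for every $y\in\Fix T=\Fix T^*$, which places $Tx-x$ in $(\Fix T)^\perp$. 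Finally, \ref{p:Fix3} combines these observations: for $z=a+b\in\Fix T$ one has $P_Uz=P_Vz=a=P_{U\cap V}z$, hence $P_UP_{\Fix T}=P_VP_{\Fix T}=P_{U\cap V}P_{\Fix T}$; since $U\cap V\subseteq\Fix T$, the inclusion $P_{\Fix T}x-x\in(\Fix T)^\perp\subseteq(U\cap V)^\perp$ gives $P_{U\cap V}P_{\Fix T}=P_{U\cap V}$; and the same argument with $T^*$ in place of $P_{\Fix T}$ (using $U\cap V\subseteq\Fix T^*$) yields $P_{U\cap V}T=P_{U\cap V}$, which iterates to $P_{U\cap V}T^n=P_{U\cap V}$.

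The main obstacle is the inclusion $\Fix T\subseteq(U\cap V)\oplus(U^\perp\cap V^\perp)$: the bare equation $Tx=x$ mixes $P_VP_U$ and $P_{V^\perp}P_{U^\perp}$ and does not obviously decouple the $U$- and $V$-components of $x$. The key idea is to symmetrize by passing to $TT^*$ (equivalently, by exploiting $2TT^*=T+T^*$ from Proposition~\ref{p:union}\ref{p:union3+}), whose block structure with respect to the decomposition $X=V\oplus V^\perp$ exposes the required direct-sum structure at once.
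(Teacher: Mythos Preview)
Your argument is correct and substantially more self-contained than the paper's. The paper proves \ref{p:Fix1} almost entirely by citation: the formula $\Fix T=(U\cap V)\oplus(U^\perp\cap V^\perp)$ is quoted from \cite[Example~2.7 and Corollary~5.5(iii)]{BBHM}, the equality $\Fix T=\Fix T^*$ from \cite[Lemma~2.1]{BDHP}, and $\Fix T^*T=\Fix T\cap\Fix T^*$ from \cite[Corollaries~4.3 and~4.37]{BC2011}; similarly, the commutation $P_{\Fix T}T=TP_{\Fix T}$ in \ref{p:Fix4} is referred to \cite[Lemma~3.12]{BDHP}. You instead give direct elementary arguments for each, and your key idea for the nontrivial inclusion $\Fix T\subseteq(U\cap V)\oplus(U^\perp\cap V^\perp)$---passing to $TT^*$ and using its block-diagonal form from Proposition~\ref{p:union}\ref{p:union4}---is genuinely different and illuminating, since it explains \emph{why} the fixed-point set splits rather than importing the fact. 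What you lose is brevity; what you gain is that the proof stays inside the paper.

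One small slip to fix: you invoke $TT^*=P_VP_UP_V+P_{V^\perp}P_{U^\perp}P_{V^\perp}$ and project onto $V\oplus V^\perp$, obtaining $P_Vx=P_VP_UP_Vx$ and $P_{V^\perp}x=P_{V^\perp}P_{U^\perp}P_{V^\perp}x$, but then conclude $P_Ux\in U\cap V$ and $P_{U^\perp}x\in U^\perp\cap V^\perp$. The norm argument you have in mind (equality in $\|P_VP_Uv\|\le\|P_Uv\|\le\|v\|$) actually yields $P_Vx\in U\cap V$ and $P_{V^\perp}x\in U^\perp\cap V^\perp$, from which the conclusion for $P_Ux$ and $P_{U^\perp}x$ follows \emph{a posteriori} by uniqueness of the orthogonal decomposition. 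Alternatively, and more in line with what you wrote, use the other form $TT^*=P_UP_VP_U+P_{U^\perp}P_{V^\perp}P_{U^\perp}$ from Proposition~\ref{p:union}\ref{p:union4} and project onto $U\oplus U^\perp$; then the conclusion about $P_Ux$ and $P_{U^\perp}x$ is immediate.
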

\begin{proof}
\ref{p:Fix1}:
Set $A = N_U$ and $B=N_V$. Then $(A+B)^{-1}(0) = U\cap V$. 
Combining \cite[Example~2.7 and Corollary~5.5(iii)]{BBHM} yields
$\Fix T = (U\cap V)\oplus(U^\perp\cap V^\perp)$. 
By \cite[Lemma~2.1]{BDHP}, we have $\Fix T = \Fix T^*$. 
Since $T$ and $T^*$ are firmly nonexpansive, and $0\in \Fix T\cap \Fix
T^*$, we apply 
\cite[Corollary~4.3 and Corollary~4.37]{BC2011} 
to deduce that 
$\Fix T^*T = \Fix T \cap \Fix T^*$. 

\ref{p:Fix1+}:
Using \ref{p:Fix1}, we obtain $\Fix T=U\cap V$
$\Leftrightarrow$
$\Up\cap\Vp=\{0\}$
$\Leftrightarrow$
$\overline{U+V} = \overline{U^{\perp\perp} + V^{\perp\perp}} =
(\Up\cap\Vp)^\perp =\{0\}^\perp = X$.

\ref{p:Fix2}: 
This follows from \ref{p:Fix1}. 

\ref{p:Fix4}: Clearly, $TP_{\Fix T}=P_{\Fix T}$. Furthermore, 
$P_{\Fix T}T= TP_{\Fix T}$ by \cite[Lemma~3.12]{BDHP}. 

\ref{p:Fix3}: 
First, \ref{p:Fix2} and \eqref{e:PP} imply 
$P_UP_{\Fix T} = P_VP_{\Fix T}= P_{U\cap V}P_{\Fix T} = P_{U\cap V}$. 
This and \ref{p:Fix4} give the remaining equalities. 
\end{proof}

\section{Main result}
\label{s:main}

We now are ready for our main results concerning the dynamical
behaviour of the Douglas--Rachford iteration.

\begin{theorem}[powers of $T$]
\label{t:main}
Let $n\in\{1,2,3,\ldots\}$, and let $x\in X$. Then 
\begin{subequations}
\begin{align}
\label{mainline}
\|T^n-P_{\Fix T}\|&= c_F^n, \\
\|(TT^*)^n-P_{\Fix T}\| &= c_F^{2n}, 
\label{mainline3}
\end{align}
\end{subequations}
and 
\begin{equation}
\label{mainline2}
\|T^nx-P_{\Fix T}x\|\le c_F^n\|x-P_{\Fix T}x\|\le c_F^n\|x\|.
\end{equation}
\end{theorem}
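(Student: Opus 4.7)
The three estimates are tightly coupled: \eqref{mainline} and \eqref{mainline2} will follow from \eqref{mainline3} via the $C^*$-identity and the intertwining $TP_{\Fix T}=P_{\Fix T}T=P_{\Fix T}$ (Proposition~\ref{p:Fix}\ref{p:Fix4}), which, combined with $\Fix T=\Fix T^*$ (Proposition~\ref{p:Fix}\ref{p:Fix1}), also gives $T^*P_{\Fix T}=P_{\Fix T}$. Explicitly, using the normality of $T$ (Proposition~\ref{p:union}\ref{p:union3}) to identify $(T^*)^nT^n=(TT^*)^n$, expansion yields
\[
(T^n-P_{\Fix T})^*(T^n-P_{\Fix T})=(TT^*)^n-P_{\Fix T},
\]
so $\|T^n-P_{\Fix T}\|^2=\|(TT^*)^n-P_{\Fix T}\|$, and \eqref{mainline3} would give \eqref{mainline}. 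For \eqref{mainline2}, the identity $(T^n-P_{\Fix T})(x-P_{\Fix T}x)=T^nx-P_{\Fix T}x$ together with \eqref{mainline} yields $\|T^nx-P_{\Fix T}x\|\leq c_F^n\|x-P_{\Fix T}x\|\leq c_F^n\|x\|$, the last step being automatic since $\Fix T$ is a closed subspace containing $0$.

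The heart of the argument is therefore \eqref{mainline3}. By Proposition~\ref{p:union}\ref{p:union4}, $TT^*=P_VP_UP_V+P_{V^\perp}P_{U^\perp}P_{V^\perp}$, and the two summands have the property that their products in either order vanish (the first both starts and ends with $P_V$, the second with $P_{V^\perp}$). A straightforward induction, absorbing adjacent $P_V^2=P_V$ and $P_{V^\perp}^2=P_{V^\perp}$ in the expansion of the $n$-th power, then yields the closed form
\[
(TT^*)^n=P_V(P_UP_V)^n+P_{V^\perp}(P_{U^\perp}P_{V^\perp})^n.
\]
Combining with $P_{\Fix T}=P_{U\cap V}+P_{U^\perp\cap V^\perp}$ (Proposition~\ref{p:Fix}\ref{p:Fix2}), and setting $A_n:=P_V(P_UP_V)^n-P_{U\cap V}$ and $B_n:=P_{V^\perp}(P_{U^\perp}P_{V^\perp})^n-P_{U^\perp\cap V^\perp}$, I would check that $A_n$ has range in $V$ and vanishes on $V^\perp$, while $B_n$ has range in $V^\perp$ and vanishes on $V$. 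Then for any $x$, decomposing $x=P_Vx+P_{V^\perp}x$ and using orthogonality,
\[
\|((TT^*)^n-P_{\Fix T})x\|^2=\|A_nP_Vx\|^2+\|B_nP_{V^\perp}x\|^2.
\]
Applying Lemma~\ref{l:main} to the pair $(V,U)$ and, via Fact~\ref{f:angle}\ref{f:angleSolmon}, to $(V^\perp,U^\perp)$ gives $\|A_n\|=\|B_n\|=c_F^{2n}$, hence $\|(TT^*)^n-P_{\Fix T}\|\leq c_F^{2n}$. Restricting the operator norm to unit vectors in $V$, on which $(TT^*)^n-P_{\Fix T}$ coincides with $A_n$, provides the matching lower bound.

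The main obstacle is the algebraic/geometric observation underpinning the second paragraph: one must recognize, hidden in Proposition~\ref{p:union}\ref{p:union4}, the ``block-diagonal'' structure of $TT^*$ with respect to $X=V\oplus V^\perp$. Once this is seen, the closed form of $(TT^*)^n$ drops out by routine absorption, and Lemma~\ref{l:main} delivers the sharp bound block by block; in particular, the $\sqrt{2}$ loss that a naive triangle-inequality application would incur is avoided precisely because $A_n$ and $B_n$ live on orthogonal halves of both the domain and the range.
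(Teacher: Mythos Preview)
Your argument is correct, and it takes a genuinely different route from the paper's. The paper first pins down the single number $c:=\|T-P_{\Fix T}\|$ by decomposing $(T-P_{\Fix T})x$ into its $V$- and $V^\perp$-components (using only Fact~\ref{f:angle}\ref{f:angle3}, the $n=1$ case), and then invokes abstract results for normal nonexpansive operators (\cite[Lemmas~3.14 and 3.15]{BDHP}) to propagate $c$ to all powers and to $TT^*$; in particular \eqref{mainline3} is obtained as a corollary of \eqref{mainline}. You reverse this: you establish \eqref{mainline3} directly for every $n$ by exploiting the block-diagonal structure of $TT^*$ with respect to $X=V\oplus V^\perp$ and invoking the full Lemma~\ref{l:main} on each block, and then recover \eqref{mainline} via the $C^*$-identity $\|T^n-P_{\Fix T}\|^2=\|(TT^*)^n-P_{\Fix T}\|$, which you obtain cleanly from normality and the intertwining $TP_{\Fix T}=P_{\Fix T}T=T^*P_{\Fix T}=P_{\Fix T}$. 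The trade-off: the paper's proof is lighter on the projection side (it never needs the odd-power Lemma~\ref{l:main}) but outsources the ``powers'' step to \cite{BDHP}; your proof is self-contained once Lemma~\ref{l:main} is in hand and makes the role of the block structure of $TT^*$ (essentially Proposition~\ref{p:laura}\ref{p:laura1}, which the paper proves only \emph{after} Theorem~\ref{t:main}) completely explicit. Both the upper bound (Pythagoras on the two blocks) and the lower bound (restrict to $V$, where the operator coincides with $A_n$ and $A_n$ already attains its norm there since it annihilates $V^\perp$) are sound as you describe them.
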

\begin{proof}
Set 
\begin{equation}
c := \|T-P_{\Fix T}\| = \|TP_{(\Fix T)^\perp}\|,
\end{equation}
and observe that the second equality is justified 
since $P_{\Fix T} = TP_{\Fix T}$.
Since $T$ is (firmly) nonexpansive and normal 
(see Proposition~\ref{p:union}\ref{p:union3}), 
it follows from \cite[Lemma~3.15(i)]{BDHP} that
\begin{equation}\label{e:c(T)}
\|T^n-P_{\Fix T}\|=c^n. 
\end{equation}
Proposition~\ref{p:otherT}\ref{p:otherT1},
Proposition~\ref{p:Fix}\ref{p:Fix2}, 
and \eqref{e:PP} imply 
\begin{subequations}
\begin{align}
(T-P_{\Fix T})x&=\big(P_VP_U+P_{V^\perp}P_{U^\perp}\big)x-\big(P_{U\cap V}+P_{U^\perp\cap V^\perp}\big)x\\
&=\underbrace{\big(P_VP_U-P_{U\cap V}\big)x}_{\in
V}+\underbrace{\big(P_{V^\perp}P_{U^\perp}-P_{U^\perp\cap
V^\perp}\big)x}_{\in V^\perp}\label{24b}\\
&=\big(P_VP_UP_Ux-P_{U\cap V}P_Ux\big)+\big(P_{V^\perp}P_{U^\perp}P_{U^\perp}x-P_{U^\perp\cap V^\perp}P_{U^\perp}x\big)\\
&=\underbrace{\big(P_VP_U-P_{U\cap V}\big)P_Ux}_{\in V} + 
\underbrace{\big(P_{V^\perp}P_{U^\perp}-P_{U^\perp\cap
V^\perp}\big)P_{U^\perp}x}_{\in V^\perp}.
\label{e:24d}
\end{align}
\end{subequations}
Hence, using \eqref{e:24d} and Fact~\ref{f:angle}\ref{f:angle3}, we obtain
\begin{subequations}
\begin{align}
\|(T-P_{\Fix T})x\|^2&=\big\|\big(P_VP_U-P_{U\cap V}\big)P_Ux\big\|^2+\big\|\big(P_{V^\perp}P_{U^\perp}-P_{U^\perp\cap V^\perp}\big)P_{U^\perp}x\big\|^2\\
&\le\big\|P_VP_U-P_{U\cap V}\big\|^2 \|P_Ux\|^2+\big\|P_{V^\perp}P_{U^\perp}-P_{U^\perp\cap V^\perp}\big\|^2\|P_{U^\perp}x\|^2\\
&= c_F^2 \|P_Ux\|^2+c_F^2\|P_{U^\perp}x\|^2\\
&=c_F^2\|x\|^2.
\end{align}
\end{subequations}
We deduce that 
\begin{equation}
\label{e:1206a}
c=\|T-P_{\Fix T}\|\leq c_F. 
\end{equation}
Furthermore,  \eqref{24b} implies
\begin{subequations}
\begin{align}
\|(T-P_{\Fix T})x\|^2&=
\big\|\big(P_VP_U-P_{U\cap V}\big)x\big\|^2+\big\|\big(P_{V^\perp}P_{U^\perp}-P_{U^\perp\cap V^\perp}\big)x\big\|^2\\
&\ge \big\|\big(P_VP_U-P_{U\cap V}\big)x\big\|^2.
\end{align}
\end{subequations}
This and Fact~\ref{f:angle}\ref{f:angle3} yield
\begin{equation}
\label{e:1206b}
c=\|T-P_{\Fix T}\|\geq \|P_VP_U-P_{U\cap V}\|=c_F.
\end{equation}
Combining \eqref{e:1206a} and \eqref{e:1206b}, 
we obtain $c=c_F$. 
Consequently, \eqref{e:c(T)} yields \eqref{mainline}
while \eqref{mainline3} follows from \cite[Lemma~3.15.(2)]{BDHP}. 

Finally, \cite[Lemma~3.14(1)\&(3)]{BDHP} results in 
$\|T^nx-P_{\Fix T}x\|\le c^n\|x-P_{\Fix T}x\|=c_F^n\|x-P_{\Fix
T}x\|=c_F^n\|P_{{\Fix T}^\perp}x\|\le c_F^n\|x\|$.
\end{proof}

The following result yields further insights in various powers of $T$ and
$T^*$.

\begin{proposition}
\label{p:laura}
Let $\nnn$. Then the following hold:
\begin{enumerate}
\item
\label{p:laura1}
$(TT^*)^n = (T^*T)^n = (P_UP_VP_U)^n + (P_{U^\perp}P_{V^\perp}P_{U^\perp})^n 
= (P_VP_UP_V)^n + (P_{V^\perp}P_{U^\perp}P_{V^\perp})^n$ if $n\geq 1$. 
\item 
\label{p:laura2}
$P_U(TT^*)^n=(P_UP_V)^nP_U = (TT^*)^nP_U$ 
and $P_V(TT^*)^n=(P_VP_U)^nP_V=(P_VP_U)^nP_V$.
\item  
\label{p:laura4}
$T^{2n} = (TT^*)^n(R_VR_U)^n$.
\item  
\label{p:laura5}
$T^{2n+1} = (TT^*)^nT(R_VR_U)^n = (TT^*)^nT^*(R_VR_U)^{n+1}$.
\item  
\label{p:laura6}
$T^{2n}=
\big((P_UP_V)^nP_U+(P_{U^\perp}P_{V^\perp})^nP_{U^\perp}\big)(R_VR_U)^{n}
=\big((P_VP_U)^nP_V+(P_{V^\perp}P_{U^\perp})^nP_{V^\perp}\big)(R_VR_U)^{n}$.
\item  
\label{p:laura7}
$T^{2n+1}= 
\big((P_UP_V)^{n+1}
+ (P_{U^\perp}P_{V^\perp})^{n+1}\big)(R_VR_U)^{n+1}
= \big((P_VP_U)^{n+1} +
(P_{V^\perp}P_{U^\perp})^{n+1}\big)(R_VR_U)^{n}$.
\end{enumerate}
\end{proposition}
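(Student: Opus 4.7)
The proof rests on two structural facts. First, by Proposition~\ref{p:union}\ref{p:union2} we have $T=T^*(R_VR_U)=(R_VR_U)T^*$ and $T^*=T(R_UR_V)=(R_UR_V)T$, with $(R_VR_U)^{-1}=R_UR_V$; an easy consequence is that the four operators $T$, $T^*$, $R_VR_U$, $R_UR_V$ commute pairwise. Second, by Proposition~\ref{p:union}\ref{p:union4} the operator $TT^*$ admits the ``diagonal'' decomposition
\[
TT^*=P_VP_UP_V+P_{V^\perp}P_{U^\perp}P_{V^\perp}=P_UP_VP_U+P_{U^\perp}P_{V^\perp}P_{U^\perp},
\]
in which the two summands annihilate each other on both sides because $P_UP_{U^\perp}=0$ and $P_VP_{V^\perp}=0$. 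These two ingredients drive everything.

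\textbf{Parts \ref{p:laura1} and \ref{p:laura2}.} The equality $(TT^*)^n=(T^*T)^n$ is immediate from Proposition~\ref{p:union}\ref{p:union3}. Because $(P_UP_VP_U)(P_{U^\perp}P_{V^\perp}P_{U^\perp})=0$ and vice versa, expanding the power of the diagonal decomposition leaves only the two ``pure'' summands, so
\[
(TT^*)^n=(P_UP_VP_U)^n+(P_{U^\perp}P_{V^\perp}P_{U^\perp})^n,
\]
and the $P_V$-version follows from the second form of the decomposition. I will use repeatedly the telescoping identity $(P_UP_VP_U)^n=(P_UP_V)^nP_U=P_U(P_VP_U)^n$ (and its $V$-analogue), which follows from $P_U^2=P_U$. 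For \ref{p:laura2}, left- or right-multiplication by $P_U$ kills the $U^\perp$ summand (since $P_UP_{U^\perp}=0$) and leaves $(P_UP_V)^nP_U=(P_UP_VP_U)^n$ on either side; the $V$-case is symmetric.

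\textbf{Parts \ref{p:laura4} and \ref{p:laura5}.} From $T=T^*(R_VR_U)$ we get $T^2=T\cdot T^*(R_VR_U)=TT^*(R_VR_U)$. Since $TT^*$ and $R_VR_U$ commute, induction (or simply raising the identity $T^2=TT^*(R_VR_U)$ to the $n$th power and using commutativity) yields $T^{2n}=(TT^*)^n(R_VR_U)^n$, which is \ref{p:laura4}. Multiplying on the right by $T$ and again using the commutativity of $T$ with $R_VR_U$ gives $T^{2n+1}=(TT^*)^nT(R_VR_U)^n$; substituting $T=T^*(R_VR_U)$ absorbs one more reflector factor to produce $(TT^*)^nT^*(R_VR_U)^{n+1}$, giving \ref{p:laura5}.

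\textbf{Parts \ref{p:laura6} and \ref{p:laura7}.} These are now assembly. Into \ref{p:laura4} substitute the $P_U$-form of \ref{p:laura1} and rewrite $(P_UP_VP_U)^n=(P_UP_V)^nP_U$ (and its $U^\perp$-counterpart) to obtain the first expression for $T^{2n}$; the $P_V$-form of \ref{p:laura1} gives the second. For \ref{p:laura7}, use $T^{2n+1}=(TT^*)^nT^*(R_VR_U)^{n+1}$, expand $(TT^*)^n$ in its $P_U$-form and $T^*=P_UP_V+P_{U^\perp}P_{V^\perp}$, and note that the four cross terms vanish by $P_UP_{U^\perp}=0$; what remains telescopes via $(P_UP_V)^nP_U\cdot P_UP_V=(P_UP_V)^{n+1}$. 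The second expression in \ref{p:laura7} comes from the alternative form $T^{2n+1}=(TT^*)^nT(R_VR_U)^n$ together with the $P_V$-decomposition of $(TT^*)^n$ and $T=P_VP_U+P_{V^\perp}P_{U^\perp}$.

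\textbf{Main obstacle.} The only real care needed is bookkeeping: the individual projectors $P_U$ and $P_V$ do \emph{not} commute, so the telescoping identities and the vanishing of cross terms must be invoked at exactly the right moment, and one has to keep track of which orientation ($U$-first or $V$-first) of \ref{p:laura1} to plug in to land on each of the two claimed forms in \ref{p:laura6} and \ref{p:laura7}.
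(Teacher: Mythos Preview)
Your proposal is correct and follows essentially the same approach as the paper: both rely on Proposition~\ref{p:union}\ref{p:union2}--\ref{p:union4} for the intertwining relations and the diagonal decomposition of $TT^*$, then exploit the vanishing of cross terms to get \ref{p:laura1}--\ref{p:laura2} and assemble \ref{p:laura6}--\ref{p:laura7} from \ref{p:laura4}--\ref{p:laura5}. The only minor difference is organizational: you front-load the pairwise commutativity of $T,T^*,R_VR_U,R_UR_V$ and derive \ref{p:laura4} by first showing $T^2=TT^*(R_VR_U)$ and raising to the $n$th power, whereas the paper instead observes $T^n=(T^*)^n(R_VR_U)^n$ and then uses normality to get $T^{2n}=T^n(T^*)^n(R_VR_U)^n=(TT^*)^n(R_VR_U)^n$; both routes are equally short.
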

\begin{proof}
\ref{p:laura1}: 
Proposition~\ref{p:union}\ref{p:union3}\&\ref{p:union4} yield
\begin{equation}
(TT^*)^n = (P_UP_VP_U + P_{U^\perp}P_{V^\perp}P_{U^\perp})^n = 
(P_UP_VP_U)^n + (P_{U^\perp}P_{V^\perp}P_{U^\perp})^n;
\end{equation}
the last equality follows similarly. 

\ref{p:laura2}:
By \ref{p:laura1}, 
$P_U(TT^*)^n = (TT^*)^nP_U =(P_UP_VP_U)^n = (P_UP_V)^nP_U$. 
The proof of the remaining equalities is similar. 

\ref{p:laura4}: 
Since $T = T^*R_VR_U = R_VR_UT^*$ 
(see Proposition~\ref{p:union}\ref{p:union2}), 
we have $T^n = (T^*)^n(R_VR_U)^n$.
Thus 
$T^{2n} = T^n(T^*)^n(R_VR_U)^n$, and therefore
$T^{2n} = (TT^*)^n(R_VR_U)^n$ using
Proposition~\ref{p:union}\ref{p:union3}. 

\ref{p:laura5}: 
Using \ref{p:laura4} and
Proposition~\ref{p:union}\ref{p:union3}\&\ref{p:union2}, 
we have
$T^{2n+1} = TT^{2n} = T(TT^*)^n(R_VR_U)^n=
(TT^*)^nT(R_VR_U)^n = (TT^*)^{n}(T^*R_VR_U)(R_VR_U)^n
= (TT^*)^{n}T^*(R_VR_U)^{n+1}$.

\ref{p:laura6}\&\ref{p:laura7}:
Using \ref{p:laura4}, \ref{p:laura1}, \ref{p:laura5}, 
and Proposition~\ref{p:otherT}\ref{p:otherT2}, 
we have 
\begin{equation}
T^{2n} = 
\big((P_UP_VP_U)^n+(P_{U^\perp}P_{V^\perp}P_{U^\perp})^n\big)(R_VR_U)^{n}
= \big((P_UP_V)^nP_U+(P_{U^\perp}P_{V^\perp})^nP_{U^\perp}\big)(R_VR_U)^{n}
\end{equation}
and 
\begin{subequations}
\begin{align}
T^{2n+1} &= 
\big((P_UP_VP_U)^n+(P_{U^\perp}P_{V^\perp}P_{U^\perp})^n\big)
(P_UP_V+P_{U^\perp}P_{V^\perp})(R_VR_U)^{n+1}\\
&=\big((P_UP_V)^{n+1}
+ (P_{U^\perp}P_{V^\perp})^{n+1}\big)(R_VR_U)^{n+1}.
\end{align}
\end{subequations}
Similarly, using
\ref{p:laura4},  \ref{p:laura1},  \ref{p:laura5}, 
and Proposition~\ref{p:otherT}\ref{p:otherT2}, 
we have 
\begin{subequations}
\begin{align}
T^{2n} 
&= \big((P_VP_UP_V)^n+(P_{V^\perp}P_{U^\perp}P_{V^\perp})^n\big)(R_VR_U)^{n}\\
&= \big((P_VP_U)^nP_V+(P_{V^\perp}P_{U^\perp})^nP_{V^\perp}\big)(R_VR_U)^{n}
\end{align}
\end{subequations}
and 
\begin{subequations}
\begin{align}
T^{2n+1} &=
\big((P_VP_UP_V)^n+(P_{V^\perp}P_{U^\perp}P_{V^\perp})^n\big)(P_VP_U+P_{V^\perp}P_{U^\perp})(R_VR_U)^{n}\\
&= \big((P_VP_U)^{n+1}
+ (P_{V^\perp}P_{U^\perp})^{n+1}\big)(R_VR_U)^{n}.
\end{align}
\end{subequations}
The proof is complete.
\end{proof}

We are now ready for our main result.
Note that item~\ref{t:main3} is the counterpart of
Fact~\ref{f:angle}\ref{f:angleAKW} for the Douglas--Rachford algorithm.

\begin{theorem}[shadow powers of $T$]
\label{t:main2}
Let $\nnn$, and let $x\in X$. 
Then the following hold:
\begin{enumerate}
\item
\label{t:main3}
$\|P_UT^n - P_{U\cap V}\|=\|P_VT^n - P_{U\cap V}\|= c_F^n$.
\item
\label{t:main3+}
$\max\big\{\|P_UT^nx-P_{U\cap V}x\|,\|P_VT^nx-P_{U\cap V}x\|\big\}\leq
c_F^n\|x\|$.
\item
\label{t:main4}
$\|P_VT^{n+1}x - P_{U\cap V}x\| \leq 
c_F\|P_UT^nx-P_{U\cap V}x\|\le  c_F\|T^nx-P_{\Fix T}x\|\le
c_F^{n+1}\|x\|$. 
\end{enumerate}
\end{theorem}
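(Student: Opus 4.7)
The plan is to read everything off the factored formulas for the powers of $T$ recorded in Proposition~\ref{p:laura}\ref{p:laura6}\&\ref{p:laura7}, and then convert operator-norm estimates on alternating-projection products (Fact~\ref{f:angle}\ref{f:angleAKW} and Lemma~\ref{l:main}) into the desired shadow estimates by exploiting that $(R_VR_U)^k$ is a surjective isometry which leaves $P_{U\cap V}$ invariant on the right.

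For \ref{t:main3}, I apply $P_U$ from the left to the formulas in Proposition~\ref{p:laura}\ref{p:laura6}\&\ref{p:laura7}. Since $P_UP_{U^\perp}=0$, the $U^\perp$-summands vanish, and after rewriting $(P_UP_V)^nP_U=P_U(P_VP_U)^n$ one obtains
\begin{equation*}
P_UT^{2n}=P_U(P_VP_U)^n(R_VR_U)^n
\qquad\text{and}\qquad
P_UT^{2n+1}=(P_UP_V)^{n+1}(R_VR_U)^{n+1}.
\end{equation*}
Because $U\cap V\subseteq U$ and $U\cap V\subseteq V$, the projection $P_{U\cap V}$ is fixed on the right by each of $R_U$ and $R_V$, hence by $(R_VR_U)^k$ for every $k$. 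Factoring $(R_VR_U)^k$ out of $P_UT^n-P_{U\cap V}$ on the right and using that it is a surjective isometry (Proposition~\ref{p:easyR}) preserves the operator norm. Lemma~\ref{l:main} then handles the even case and Fact~\ref{f:angle}\ref{f:angleAKW} (applied with $U$ and $V$ interchanged, legitimate by Fact~\ref{f:angle}\ref{f:angleSolmon}) handles the odd case, each giving exactly $c_F^n$. The $P_V$-statements are derived in the same way from the $V$-formulations in Proposition~\ref{p:laura}\ref{p:laura6}\&\ref{p:laura7}. Part \ref{t:main3+} follows from \ref{t:main3} applied pointwise to $x$.

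For \ref{t:main4}, the rightmost inequality is Theorem~\ref{t:main}. The middle inequality follows from the identity $P_{U\cap V}=P_UP_{\Fix T}$ of Proposition~\ref{p:Fix}\ref{p:Fix3}, which gives $P_UT^nx-P_{U\cap V}x=P_U(T^nx-P_{\Fix T}x)$, so that the $1$-Lipschitz property of $P_U$ applies. For the first inequality I verify the easy identity $P_VT=P_VP_U$ by expanding $T=P_VR_U+\Id-P_U$; combining with $P_VP_{U\cap V}=P_{U\cap V}$ one obtains
\begin{equation*}
P_VT^{n+1}x-P_{U\cap V}x=P_V\bigl(P_UT^nx-P_{U\cap V}x\bigr).
\end{equation*}
With $y:=P_UT^nx-P_{U\cap V}x$, one has $y\in U$ and $P_{U\cap V}y=0$ (the latter using $P_{U\cap V}T^n=P_{U\cap V}$ from Proposition~\ref{p:Fix}\ref{p:Fix3}), so $P_Vy=P_VP_Uy=(P_VP_U-P_{U\cap V})y$, and Fact~\ref{f:angle}\ref{f:angle3} yields $\|P_Vy\|\le c_F\|y\|$, which is exactly the desired first inequality.

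The main obstacle is this first inequality of \ref{t:main4}: one must recognize that $y=P_UT^nx-P_{U\cap V}x$ lies in $U\cap(U\cap V)^\perp$, which is precisely the subspace where the cosine-of-Friedrichs-angle bound on $\|P_Vy\|$ becomes accessible via the rewriting $P_Vy=(P_VP_U-P_{U\cap V})y$ together with Fact~\ref{f:angle}\ref{f:angle3}. Once this observation is in place, the remainder is routine bookkeeping.
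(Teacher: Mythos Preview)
Your proof is correct and follows essentially the same route as the paper's own argument: for \ref{t:main3} you project the factorizations in Proposition~\ref{p:laura}\ref{p:laura6}\&\ref{p:laura7}, pull out the isometry $(R_VR_U)^k$ (using $P_{U\cap V}(R_VR_U)^k=P_{U\cap V}$), and invoke Lemma~\ref{l:main} and Fact~\ref{f:angle}\ref{f:angleAKW}, exactly as the paper does; for \ref{t:main4} your pointwise observation that $y=P_UT^nx-P_{U\cap V}x$ lies in $U\cap(U\cap V)^\perp$ and hence satisfies $P_Vy=(P_VP_U-P_{U\cap V})y$ is precisely the content of the paper's operator factorization $P_VT^{n+1}-P_{U\cap V}=(P_VP_U-P_{U\cap V})(P_UT^n-P_{U\cap V})$, just unpacked at the level of a single vector. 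The only cosmetic difference is that the paper writes the key identity in \ref{t:main4} directly as a product of operators, whereas you first verify $P_VT=P_VP_U$ and then argue pointwise; both lead to the same bound via Fact~\ref{f:angle}\ref{f:angle3}.
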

\begin{proof} 
\ref{t:main3}:
Note that $P_{U\cap V}R_VR_U =
P_{U\cap V}(P_V-P_{V^\perp})R_U
=P_{U\cap V}R_U = P_{U\cap V}(P_U-P_{U^\perp})
= P_{U\cap V}$. 
It follows that $P_{U\cap V}=P_{U\cap V}(R_VR_U)^n = P_{U\cap
V}(R_VR_U)^{n+1}$. 
Hence, using Proposition~\ref{p:laura}\ref{p:laura6}, we have
$P_UT^{2n}-P_{U\cap V} = (P_UP_V)^nP_U(R_VR_U)^n - P_{U\cap V}(R_VR_U)^n 
= ((P_UP_V)^nP_U-P_{U\cap V})(R_VR_U)^n$
and thus
$\|P_UT^{2n}-P_{U\cap V}\| = c_F^{2n}$ by Lemma~\ref{l:main}. 
It follows likewise from Proposition~\ref{p:laura}\ref{p:laura7} and 
Fact~\ref{f:angle}\ref{f:angleAKW} that 
$\|P_UT^{2n+1}-P_{U\cap V}\|=\|((P_UP_V)^{n+1}-P_{U\cap
V})(R_VR_U)^{n+1}\|=\|(P_UP_V)^{n+1}-P_{U\cap V}\|=c_F^{2n+1}$.
Thus, we have $\|P_UT^{n}-P_{U\cap V}\|=c_F^n$. 
The proof of $\|P_VT^{n}-P_{U\cap V}\|=c_F^n$ is analogous.

\ref{t:main3+}: Clear from \ref{t:main3}. 

\ref{t:main4}:
Using Proposition~\ref{p:otherT}\ref{p:otherT1} and
Proposition~\ref{p:Fix}\ref{p:Fix3}, 
we have 
\begin{subequations}
\begin{align}
P_VT^{n+1}-P_{U\cap V} &=
P_V(P_VP_U+P_{V^\perp}P_{U^\perp})T^n-P_{U\cap V}P_{\Fix T}
= P_VP_UT^n - P_{U\cap V}P_{\Fix T}T^n\\
&= P_VP_UT^n - P_{U\cap V} + P_{U\cap V} - P_{U\cap V}T^n
=(P_VP_U-P_{U\cap V})(P_UT^n-P_{U\cap V}).
\end{align}
\end{subequations}
Combining this with Fact~\ref{f:angle}\ref{f:angle3} and
\eqref{mainline}, we get
$\|P_VT^{n+1}x-P_{U\cap V}x\| \leq c_F\|P_UT^{n}x-P_{U\cap V}x\|
=c_F\|P_U(T^{n}x-P_{\Fix T}x)\|
\leq c_F\|T^{n}x-P_{\Fix T}x\| \leq c_F^{n+1}\|x\|$.
\end{proof}

\begin{corollary}[linear convergence]
\label{c:main2}
We have
$T^nx\to P_{(U\cap V)+(U^\perp \cap V^\perp)}x$,
$P_UT^nx \to P_{U\cap V}x$, and
$P_VT^nx \to P_{U\cap V}x$.
If $U+V$ is closed, 
then convergence of these sequences is linear with rate $c_F<1$. 
\end{corollary}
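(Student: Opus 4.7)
The plan is to assemble the corollary directly from the heavy machinery already in place, namely Corollary~\ref{c:jammery}, Proposition~\ref{p:Fix}, Theorem~\ref{t:main}, Theorem~\ref{t:main2}, and Fact~\ref{f:angle}\ref{f:anglepos}. Since $T$ is linear and firmly nonexpansive, Corollary~\ref{c:jammery} gives $T^nx\to P_{\Fix T}x$. By Proposition~\ref{p:Fix}\ref{p:Fix1}\&\ref{p:Fix2}, $\Fix T=(U\cap V)\oplus(U^\perp\cap V^\perp)$ and $P_{\Fix T}=P_{U\cap V}+P_{U^\perp\cap V^\perp}$, which identifies the limit as $P_{(U\cap V)+(U^\perp\cap V^\perp)}x$ and establishes the first assertion.

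For the shadow sequences I would simply invoke continuity of $P_U$ and $P_V$ (each is nonexpansive) together with Proposition~\ref{p:Fix}\ref{p:Fix3}, which asserts $P_UP_{\Fix T}=P_VP_{\Fix T}=P_{U\cap V}$. Applying $P_U$ (resp.\ $P_V$) to $T^nx\to P_{\Fix T}x$ therefore yields $P_UT^nx\to P_{U\cap V}x$ and $P_VT^nx\to P_{U\cap V}x$, establishing the second and third assertions.

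For the linear rate, assume $U+V$ is closed. By Fact~\ref{f:angle}\ref{f:anglepos} this is equivalent to $c_F<1$. Then the estimate \eqref{mainline2} of Theorem~\ref{t:main} reads $\|T^nx-P_{\Fix T}x\|\le c_F^n\|x\|$, so $(c_F^{-n}\|T^nx-P_{\Fix T}x\|)_\nnn$ is bounded, giving linear convergence of $T^nx$ with rate $c_F$ in the sense of the footnote following Fact~\ref{f:HLN}. The corresponding rate for the shadow sequences follows at once from Theorem~\ref{t:main2}\ref{t:main3+} (or \ref{t:main4}), which supplies $\|P_UT^nx-P_{U\cap V}x\|\le c_F^n\|x\|$ and the analogous bound for $P_VT^n$.

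There is essentially no obstacle here: the statement is a clean repackaging of Theorems~\ref{t:main} and \ref{t:main2} and the structural results for $\Fix T$. The only point worth a sentence of care is the equivalence ``$U+V$ closed $\Leftrightarrow$ $c_F<1$'' from Fact~\ref{f:angle}\ref{f:anglepos}, which is what converts the uniform operator-norm estimates into the linear-rate conclusion.
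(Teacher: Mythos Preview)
Your proposal is correct and follows essentially the same route as the paper's proof: invoke Corollary~\ref{c:jammery} for $T^nx\to P_{\Fix T}x$, identify $\Fix T$ via Proposition~\ref{p:Fix}, pass to the shadow sequences by applying $P_U$ and $P_V$ and using Proposition~\ref{p:Fix}\ref{p:Fix3}, and obtain the linear rate from Fact~\ref{f:angle}\ref{f:anglepos} together with the estimates of Theorems~\ref{t:main} and~\ref{t:main2}. The paper's proof is slightly terser (it cites only Theorem~\ref{t:main2}\ref{t:main4} for the rate, which in turn embeds \eqref{mainline2}), but the logical content is the same.
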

\begin{proof}
Corollary~\ref{c:jammery}, 
Proposition~\ref{p:Fix}\ref{p:Fix2}\&\ref{p:Fix3} and
\eqref{e:PP} imply
$P_UT^nx\to P_UP_{\Fix T}x= P_{U\cap V}x$ and analogously
$P_VT^nx\to P_{U\cap V}x$.
Recall from 
Fact~\ref{f:angle}\ref{f:anglepos}
that $U+V$ is closed if and only if $c_F<1$.
The conclusion is thus clear from Theorem~\ref{t:main2}\ref{t:main4}. 
\end{proof}

A translation argument gives the following result (see also
\cite[Theorem~3.17]{BCL04} for an earlier related result).

\begin{corollary}[affine subspaces]
Suppose that $U$ and $V$ are closed \emph{affine} subspaces
of $X$ such that $U\cap V\neq\varnothing$, and let $x\in X$. 
Then 
\begin{equation}
T^nx \to P_{\Fix T}x, \quad 
P_UT^nx \to P_{U\cap V}x, \quad\text{and}\quad P_VT^nx \to P_{U\cap V}x. 
\end{equation}
If $(U-U)+(V-V)$ is closed,
then the convergence is linear with rate $c_F(U-U,V-V)<1$. 
\end{corollary}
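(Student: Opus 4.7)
The plan is to reduce the affine case to the already-established linear case by a translation argument, using any fixed point $z\in U\cap V$ as origin. Set $\widetilde U := U-z$ and $\widetilde V := V-z$; these are closed \emph{linear} subspaces of $X$ (independent of the choice of $z$, and equal to $U-U$ and $V-V$ respectively), and $0\in \widetilde U\cap\widetilde V$, so Corollary~\ref{c:main2} applies to the pair $(\widetilde U,\widetilde V)$ with Douglas--Rachford operator $\widetilde T := P_{\widetilde V}(2P_{\widetilde U}-\Id)+\Id-P_{\widetilde U}$.

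First I would establish the conjugation identity
\begin{equation}
(\forall x\in X)\quad Tx = z + \widetilde T(x-z),
\end{equation}
from which $T^n x = z + \widetilde T^{\,n}(x-z)$ follows by induction. This is a direct computation using the standard projection identity for affine subspaces $P_U x = z + P_{\widetilde U}(x-z)$ (and similarly for $V$): plug into \eqref{e:T2}, notice that $2P_Ux-x = z + (2P_{\widetilde U}(x-z) - (x-z))$, apply $P_V$ to get $z + P_{\widetilde V}(2P_{\widetilde U}(x-z)-(x-z))$, and simplify.

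Next I would transport each convergence statement of Corollary~\ref{c:main2} through this conjugation. Convergence $\widetilde T^{\,n}(x-z)\to P_{\Fix\widetilde T}(x-z)$ yields $T^n x\to z + P_{\Fix\widetilde T}(x-z)$; since $\Fix T$ is the affine translate $z + \Fix\widetilde T$ (immediate from the conjugation identity and $z\in\Fix T$), this limit equals $P_{\Fix T}x$. Similarly, since $P_U T^n x = z + P_{\widetilde U}\widetilde T^{\,n}(x-z)$ and $P_{U\cap V}x = z + P_{\widetilde U\cap\widetilde V}(x-z)$ (using $U\cap V = z + \widetilde U\cap\widetilde V$), the shadow convergences $P_{\widetilde U}\widetilde T^{\,n}(x-z)\to P_{\widetilde U\cap\widetilde V}(x-z)$ and the analogous one for $V$ transfer verbatim.

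For the linear rate, observe that $\widetilde U + \widetilde V = (U-U)+(V-V)$, so the closedness hypothesis gives, via Fact~\ref{f:angle}\ref{f:anglepos}, that $c_F(\widetilde U,\widetilde V) = c_F(U-U,V-V) < 1$. The linear rate from Corollary~\ref{c:main2} for $\widetilde T$ passes through the isometric translation $x\mapsto x-z$ without change, yielding the same rate for the affine iterates. There is no real obstacle here; the only point requiring minimal care is the verification of the conjugation identity, and the (trivial) check that the base-point $z$ drops out of every limit, confirming that the statement is intrinsic to $U$ and $V$.
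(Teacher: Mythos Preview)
Your proposal is correct and follows precisely the ``translation argument'' the paper invokes without spelling out; you have simply filled in the details (conjugation identity $Tx = z + \widetilde T(x-z)$, transport of fixed-point sets and projections, and the observation that $\widetilde U + \widetilde V = (U-U)+(V-V)$) that the paper leaves to the reader.
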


\section{Two lines in the Euclidean plane}

\label{s:plane}

We present some geometric results concerning the lines in the
plane which will not only be useful later but which also illustrate
the results of the previous sections.
In this section, we assume that $X=\RR^2$, and we set
\begin{equation}
e_0 := (1,0),\quad
e_{\pi/2} := (0,1), \quad
\text{and}\quad
\big(\forall \theta\in[0,\pi/2]\big)\;\;
e_\theta := \cos(\theta)e_0 + \sin(\theta)e_{\pi/2}.
\end{equation}
Define the (counter-clockwise) rotator by
\begin{equation}
\big(\forall \theta\in\RP)\quad
R_\theta := \begin{pmatrix}
\cos(\theta) & -\sin(\theta)\\
\sin(\theta) & \cos(\theta)
\end{pmatrix},
\end{equation}
and note that $R_\theta^{-1} = R_\theta^*$.
Now let $\theta\in\left]0,\pi/2\right]$, 
and suppose that
\begin{equation}
U=\RR\cdot e_0
\quad\text{and}\quad
V=\RR\cdot e_\theta = R_\theta(U). 
\end{equation}
Then
\begin{equation}
U\cap V = \{0\}
\quad\text{and}\quad
c_F(U,V) = \cos(\theta).
\end{equation}
By, e.g., \cite[Proposition~28.2(ii)]{BC2011},
$P_V = R_\theta P_UR_\theta^*$.
In terms of matrices, we thus have
\begin{equation}
P_U =
\begin{pmatrix}
1 & 0 \\
0 & 0
\end{pmatrix}
\quad\text{and}\quad
P_V =
\begin{pmatrix}
\cos^2(\theta) & \sin(\theta)\cos(\theta)\\
\sin(\theta)\cos(\theta) & \sin^2(\theta)
\end{pmatrix}.
\end{equation}
Consequently, 
the corresponding Douglas--Rachford splitting operator 
is 
\begin{subequations}
\begin{align}
T &= P_V(2P_U-\Id) + \Id - P_U =
\begin{pmatrix}
\cos^2(\theta) & -\sin(\theta)\cos(\theta)\\
\sin(\theta)\cos(\theta) & 1-\sin^2(\theta)
\end{pmatrix}\\
&=
\cos(\theta)\begin{pmatrix}
\cos(\theta) & -\sin(\theta)\\
\sin(\theta)& \cos(\theta)
\end{pmatrix}
=\cos(\theta) R_\theta.
\end{align}
\end{subequations}
Thus,
$\Fix T = \{0\} = P_U(\Fix T)$,
\begin{equation}
(\forall\nnn)\quad 
T^n = \cos^n(\theta)R_{n\theta},
\quad
P_UT^n = \cos^n(\theta)\begin{pmatrix}
\cos(n\theta) & -\sin(n\theta)\\
0 & 0 
\end{pmatrix}, 
\end{equation}
and
\begin{equation}\label{e:16}
(\forall x\in X)\quad 
\|T^n x\| = \cos^n(\theta)\|x\|,
\;\;
\|P_UT^nx\| = \cos^n(\theta)|\cos(n\theta)x_1-\sin(n\theta)x_2|. 
\end{equation}
Furthermore,
\begin{equation}
(\forall n\geq 1)\quad 
(P_VP_U)^n =
\cos^{2n-1}(\theta)\begin{pmatrix}
\cos(\theta) & 0 \\
\sin(\theta) & 0 
\end{pmatrix}
\end{equation}
and thus
\begin{equation}\label{e:18}
(\forall x\in X)\quad
\|(P_VP_U)^nx\| = \cos^{2n-1}(\theta)|x_1|\leq
\cos^{2n-1}(\theta)\|x\|. 
\end{equation}

\section{An example without linear rate of convergence}

\label{s:anex}

In this section, let us assume that our underlying Hilbert space is 
$\ell^2(\NN) = \RR^2\oplus \RR^2 \oplus \cdots$. 
It will be more suggestive to use boldface letters
for vectors lying in, and operators acting on, $\ell^2(\NN)$. 
Thus, 
\begin{equation}
\bX = \ell^2(\NN),
\end{equation}
and we write is $\bx = (x_n)_\nnn =
((x_0,x_1),(x_2,x_3),\ldots)$ for a generic vector in $\bX$. 
Suppose that $(\theta_n)_\nnn$ is a sequence of angles in
$\left]0,\pi/2\right[$ with $\theta_n\to 0^+$. 
We set $(\forall\nnn)$ $c_n := \cos(\theta_n)\to 1^-$. 
We will use notation and results 
from Section~\ref{s:plane}. 
We assume that 
\begin{equation}\label{UU}
\bU = \RR\cdot e_0\times \RR\cdot e_0 \times \cdots \subseteq \bX
\end{equation}
and that
\begin{equation}\label{VV}
\bV = \RR\cdot e_{\theta_0}\times \RR\cdot
e_{\theta_1}\times\cdots \subseteq \bX.
\end{equation}
Then 
\begin{equation}
\bU\cap\bV = \{\boldsymbol{0}\}
\quad\text{and}\quad
c_F(\bU,\bV) = \sup_\nnn c_F(\RR\cdot e_0,\RR\cdot e_{\theta_n}) = \sup_\nnn
c_n = 1.
\end{equation}
The Douglas--Rachford splitting operator is
\begin{equation}
\bT = c_0R_{\theta_0}\oplus c_1R_{\theta_1}\oplus\cdots.
\end{equation}
Now let $\bx=(x_n)_\nnn \in\bX$ and $\gamma\in\zeroun$. 
Assume further that $\menge{\nnn}{x_n\neq 0}$ is infinite. 
Then there exists $N\in\NN$ such that
$(x_{2N},x_{2N+1})\in\RR^2\smallsetminus\{(0,0)\}$ and
$c_N>\gamma$. 
Hence
\begin{equation}
\gamma^{-n}\|\bT^n\bx\| \geq 
\gamma^{-n}c_N^n\|(x_{2N},x_{2N+1})\|\to\pinf;
\end{equation}
consequently, 
\begin{equation}
\text{$\bT^n\bx\to \boldsymbol{0}$,\, but not linearly with
rate $\gamma$.}
\end{equation}

Let us now assume in addition that 
$\theta_0=\pi/3$ and $(\forall n\geq 1)$ $\theta_n=\pi/(4n)$ and
$x_n = 1/(n+1)$. 
Then there exists $\delta>1$ and
$N\geq 1$ such that $(\forall n\geq N)$ $\gamma^{-1}c_n\geq
\delta$. Hence, for every $n\geq N$, we have
\begin{equation}
\gamma^{-n}\|P_{\bU}\bT^n\bx\|
\geq \gamma^{-n}c_n^n
|\cos(n\theta_n)x_{2n}-\sin(n\theta_n)x_{2n+1}|
\geq \frac{\delta^n}{2^{3/2}(n+1)(2n+1)}\to\pinf; 
\end{equation}
thus, 
\begin{equation}
\text{$P_{\bU}\bT^n\bx\to \boldsymbol{0}$,\, but not linearly with
rate $\gamma$.}
\end{equation}

In summary, these constructions illustrate that 
when the Friedrichs angle is zero, then
one cannot expect linear convergence of the
(projected) iterates of the Douglas--Rachford splitting operator. 

\section{Comparison with the method of alternating projections}

\label{s:compare}

Let us now compare our main results (Theorems~\ref{t:main} and
\ref{t:main2})
with the \emph{method of alternating projections}, for which
the following fundamental result is well known.

\begin{fact}[Aronszajn]
\label{f:map}
{\rm (See \cite{Aron} or \cite[Theorem~9.8]{Deutsch}.)} 
Let $x\in X$.
Then 
\begin{equation}
(\forall n\geq 1)\quad
\|(P_VP_U)^nx-P_{U\cap V}x\| \leq
c_F^{2n-1}\|x-P_{U\cap V}x\|. 
\end{equation}
\end{fact}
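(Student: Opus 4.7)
The plan is to reduce this pointwise estimate to the operator-norm identity $\|(P_VP_U)^n - P_{U\cap V}\| = c_F^{2n-1}$ already recorded in Fact~\ref{f:angle}\ref{f:angleAKW}. The only refinement over that bound is the replacement of $\|x\|$ by $\|x-P_{U\cap V}x\|$ on the right-hand side, which I would obtain by exploiting the invariance of $U\cap V$ under both $P_U$ and $P_V$.

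First I would observe that, because $U\cap V\subseteq U$ and $U\cap V\subseteq V$, the implication in \eqref{e:PP}, applied to the pairs $(U\cap V, U)$ and $(U\cap V, V)$, yields $P_U P_{U\cap V} = P_{U\cap V}$ and $P_V P_{U\cap V} = P_{U\cap V}$. Iterating gives $(P_VP_U)^n P_{U\cap V} = P_{U\cap V}$, so for every $x\in X$,
\[
(P_VP_U)^n x - P_{U\cap V}x = \bigl((P_VP_U)^n - P_{U\cap V}\bigr)\bigl(x - P_{U\cap V}x\bigr).
\]
Taking norms and invoking Fact~\ref{f:angle}\ref{f:angleAKW} then delivers
\[
\|(P_VP_U)^n x - P_{U\cap V}x\| \leq \|(P_VP_U)^n - P_{U\cap V}\|\cdot\|x - P_{U\cap V}x\| = c_F^{2n-1}\|x - P_{U\cap V}x\|,
\]
which is exactly the desired estimate.

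Once Fact~\ref{f:angle}\ref{f:angleAKW} is in hand there is no real obstacle; the substantive difficulty I would regard as sitting inside that operator-norm identity itself, i.e., the classical Kayalar--Weinert sharpening of Aronszajn's estimate. An alternative, more self-contained route would be to split $X = (U\cap V) \oplus (U\cap V)^\perp$, observe that both $P_U$ and $P_V$ leave each summand invariant (so $(P_VP_U)^n$ and $P_{U\cap V}$ agree on the first summand while $P_{U\cap V}$ vanishes on the second), and then apply the operator-norm bound directly to the residual $x - P_{U\cap V}x\in (U\cap V)^\perp$, thereby bypassing the algebraic manipulation above.
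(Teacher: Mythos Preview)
Your argument is correct. Note, however, that the paper does not actually supply a proof of this statement: it is recorded as a \emph{Fact} with a citation to Aronszajn and to \cite[Theorem~9.8]{Deutsch}, so there is no in-paper proof to compare against. What you have done is derive the pointwise estimate from the operator-norm identity in Fact~\ref{f:angle}\ref{f:angleAKW}, which the paper also quotes without proof. Your reduction is the natural one and is essentially the same computation the paper carries out elsewhere (e.g., in the proof of Lemma~\ref{l:main}) when passing between operator-norm identities and factored expressions via \eqref{e:PP}. The only remark is that, as you yourself note, the substantive content lies in Fact~\ref{f:angle}\ref{f:angleAKW}; your proof is a clean corollary of it rather than an independent argument.
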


In Fact~\ref{f:map}, the rate $c_F$ is best possible (see 
Fact~\ref{f:angle}\ref{f:angleAKW} and the
results and comments in \cite[Chapter~9]{Deutsch}), 
and if the Friedrichs
angle is 0, then slow convergence may occur 
(see, e.g., \cite{BDH}). 

From Theorem~\ref{t:main}, Corollary~\ref{c:main2}, and Fact~\ref{f:map}, we see that 
the rate of convergence $c_F$ of $(T^nx)_\nnn$ to $P_{\Fix T}x$ and, 
\emph{a fortiori}, of $(P_UT^nx)_\nnn$ to $P_{U\cap V}x$, 
is clearly \emph{slower} than the rate of convergence $c_F^2$ of
$((P_VP_U)^nx)_\nnn$ to $P_{U\cap V}x$. 
In other words, the Douglas--Rachford splitting method 
appears to be twice as slow as the method
of alternating projections. 
While this is certainly the case for the iterates $(T^nx)_\nnn$,
the actual iterates of interest, namely $(P_UT^nx)_\nnn$, in
practice often (somewhat paradoxically) make striking
non-monotone progress. 

Let us illustrate this using the set up of Section~\ref{s:plane}
the notation and results of which we will utilize. 

Consider first \eqref{e:16} and \eqref{e:18} with
$x=e_0$ and $\theta={\pi}/{17}$.
In Figure~\ref{f1}, we show the first 100 iterates of
the sequences 
$(\|T^n x\|)_\nnn$ (red line), 
$(\|P_UT^nx\|)_\nnn$ (blue line), and $(\|(P_VP_U)^nx\|)_\nnn$
(green line). The sequences $(\|T^nx\|)_\nnn$ and
$(\|(P_VP_U)^nx\|)_\nnn$, which are decreasing,
represent the distance of the iterates to
$0$, the unique solution of the problem. 
While $(\|(P_VP_U)^nx\|)_\nnn$ decreases
faster than  $(\|T^nx\|)_\nnn$, the sequence of ``shadows'' 
$(\|P_UT^nx\|)_\nnn$ exhibits a curious non-monotone
``rippling'' behaviour
--- it may be quite close to the solution soon after 
the iteration starts!

\begin{figure}[H]
\begin{center}
\includegraphics[height=3.0in]{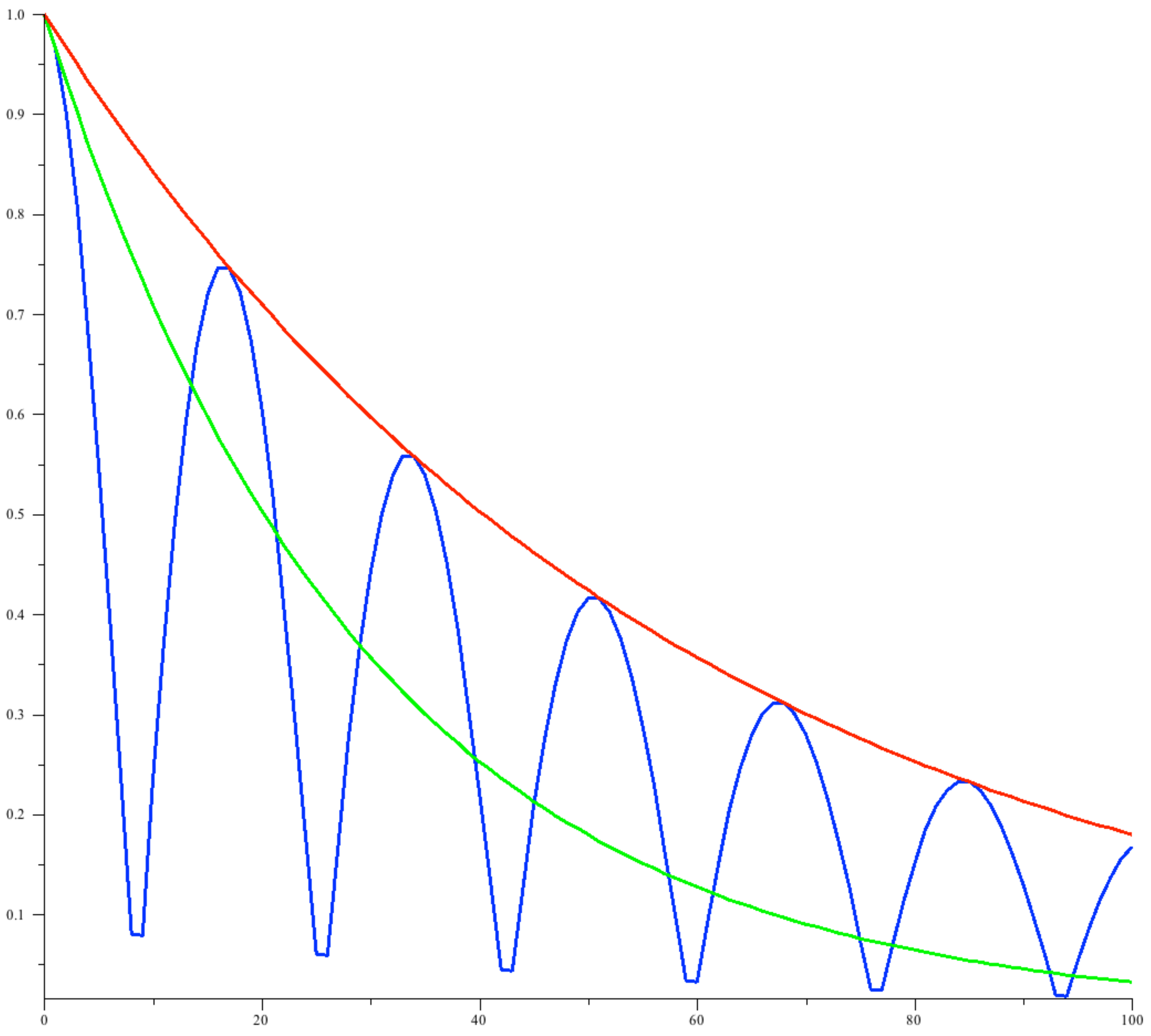}
\end{center}
\caption{
The distance of the first 100 terms of the sequences
$(T^nx)_\nnn$ (red), $(P_UT^nx)_\nnn$ (blue), and
$((P_VP_U)^nx)_\nnn$ (green) to the
unique solution.
\label{f1}}
\end{figure}

We next show in 
Figure~\ref{f2a} and Figure~\ref{f2b}
the first $100$ terms of 
$(\|T^nx\|)_\nnn$ and $(\|(P_VP_U)^nx\|)_\nnn$, where 
$\theta\colon[0,1]\to[0,\pi/2]\colon t\mapsto (\pi/2)t^3$ is 
parametrized to exhibit more clearly the behaviour for small
angles. Clearly, and as predicted, smaller angles correspond to
slower rates of convergence.

\begin{figure}[H]
\begin{center}
\includegraphics[height=3.0in]{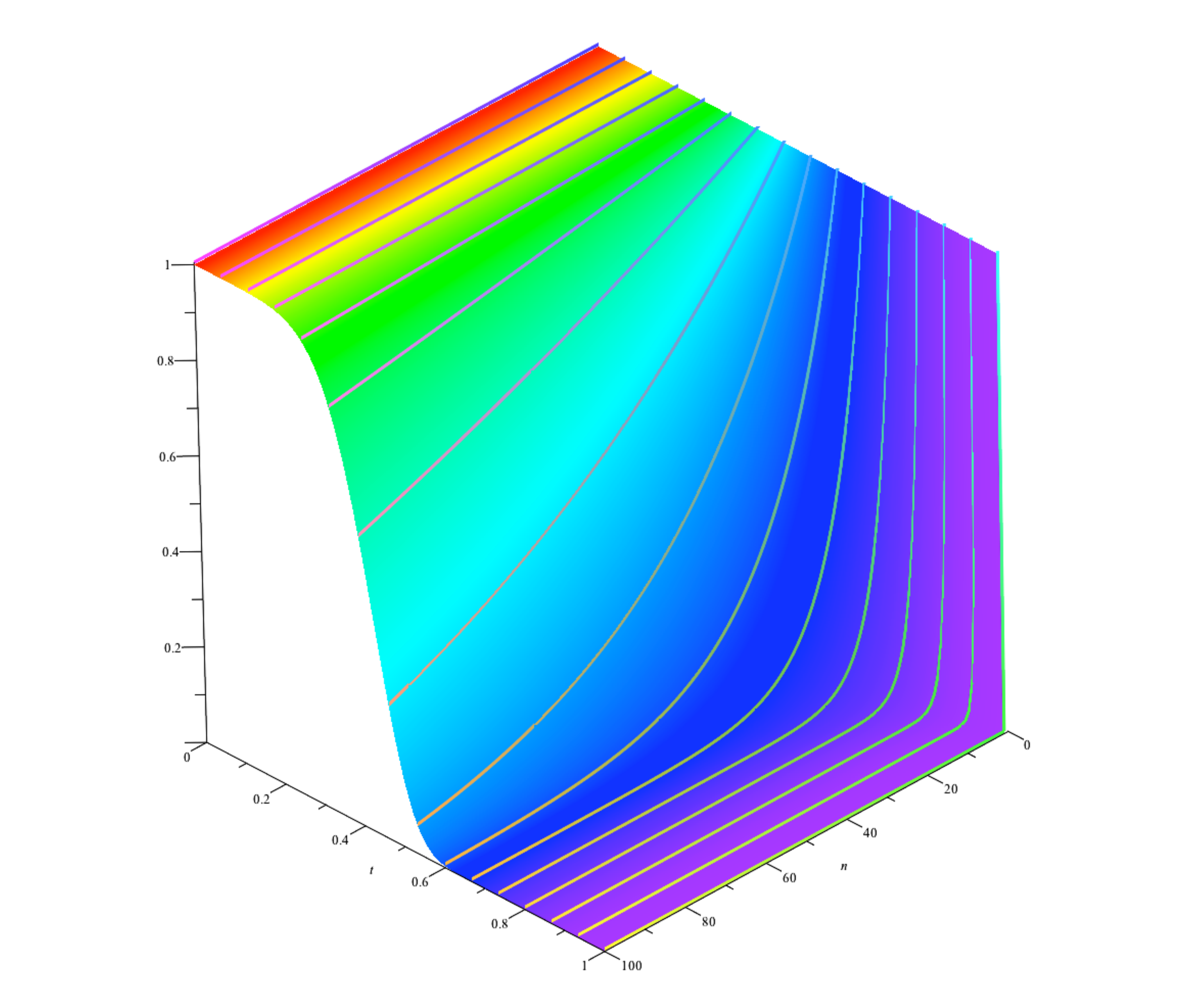}
\caption{
The distance of the first 100 terms of the sequence
$(T^nx)_\nnn$ to the unique solution when the angle ranges
between $0$ and $\pi/2$. 
\label{f2a}
}
\end{center}
\end{figure}

\begin{figure}[H]
\begin{center}
\includegraphics[height=3.0in]{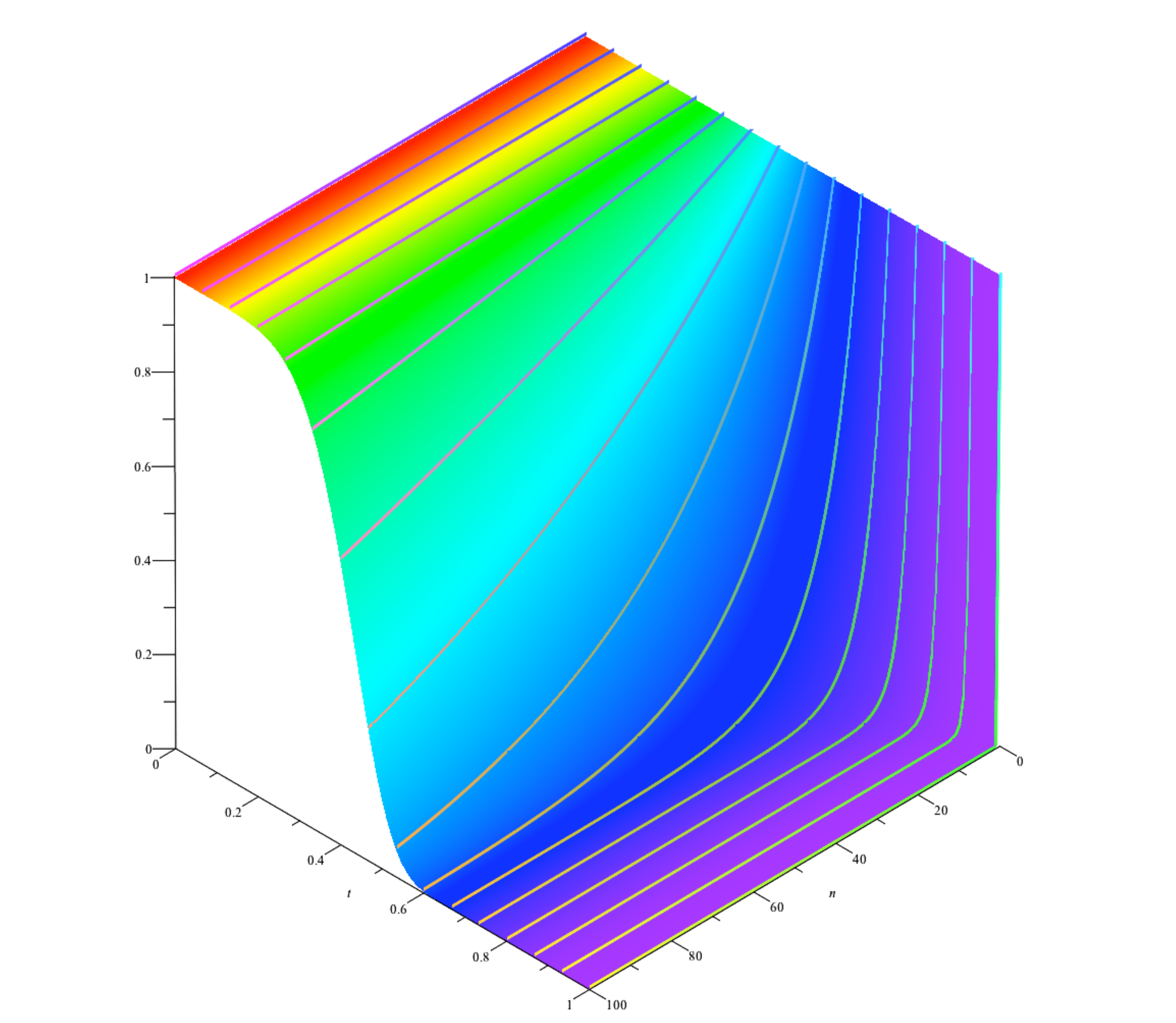}
\caption{
The distance of the first 100 terms of the sequence
$((P_VP_U)^nx)_\nnn$ to the unique solution when the angle ranges
between $0$ and $\pi/2$. 
\label{f2b}}
\end{center}
\end{figure}

In Figure~\ref{f3}, we depict the ``shadow sequence'' $(\|P_UT^nx\|)_\nnn$.
Observe again the ``rippling'' phenomenon. While the situation of two lines
appears at first to be quite special, it turns out that the same
``rippling'' also arises in a quite different setting; see \cite[Figures 4 and 6]{BK13}.

\begin{figure}[H]
\begin{center}
\includegraphics[height=3.0in]{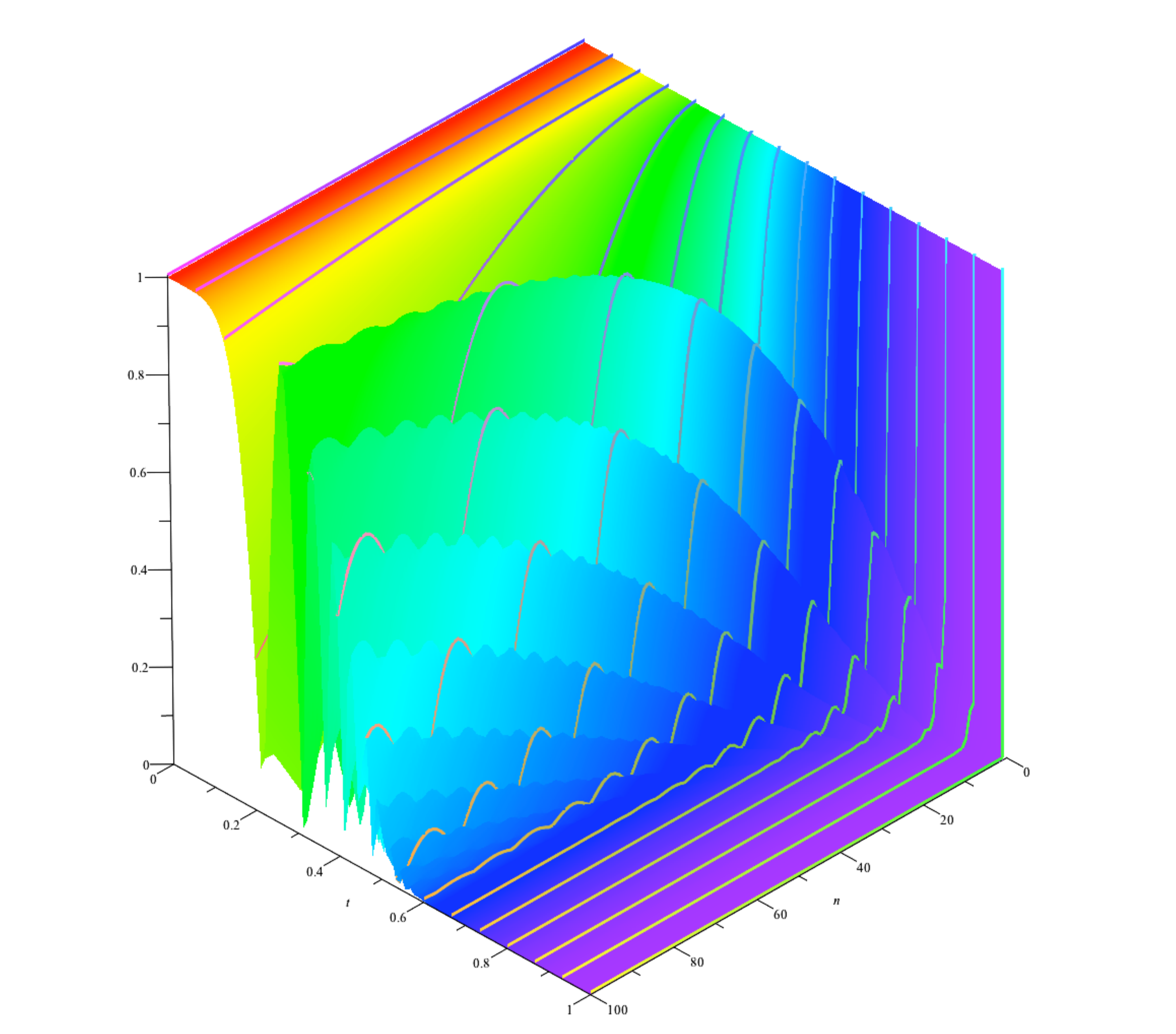}
\caption{
The distance of the first 100 terms of the ``shadow'' sequence
$(P_UT^nx)_\nnn$ to the unique solution when the angle ranges
between $0$ and $\pi/2$. 
\label{f3}
}
\end{center}
\end{figure}

The figures in this section were prepared in \texttt{Maple}\texttrademark 
\ (see \cite{Maple}). 

\section{Numerical experiments}

\label{s:numerical}

In this section, we compare 
the Douglas--Rachford method (DRM) to the method 
of alternating projections (MAP)
for finding $P_{U\cap V}x_0$. 
Our numerical set up is as follows.
We assume that $X=\RR^{50}$, and we randomly 
generated $100$ pairs of subspaces $U$ and $V$ of $X$ such that $U\cap
V\neq\{0\}$. 
We then chose 10 random starting points, each with Euclidean norm $10$.
This resulted in a total of 1,000 instances for each algorithm. 
Note that the sequences to monitor are
\begin{equation}
\big(P_UT^nx_0\big)_\nnn
\quad\text{and}\quad
\big((P_VP_U)^nx_0\big)_\nnn
\end{equation}
for DRM and for MAP, respectively. 
Our stopping criterion tolerance was set to 
\begin{equation}
\ve:=10^{-3}.
\end{equation}
We investigated two different stopping criteria, which we detail 
and discuss in the following two sections.

\subsection{Stopping criterion based on the true error}
We terminated the algorithm when the current iterate 
of the monitored sequence $(z_n)_\nnn$ satisfies
\begin{equation}
d_{U\cap V}(z_n)<\ve
\end{equation}
for the first time. 
Note that in applications, we typically would not have access to
this information but here we use it to see the true performance
of the two methods.

\begin{figure}[htb]
\hspace*{0.1in}
\includegraphics[width=5.8in]{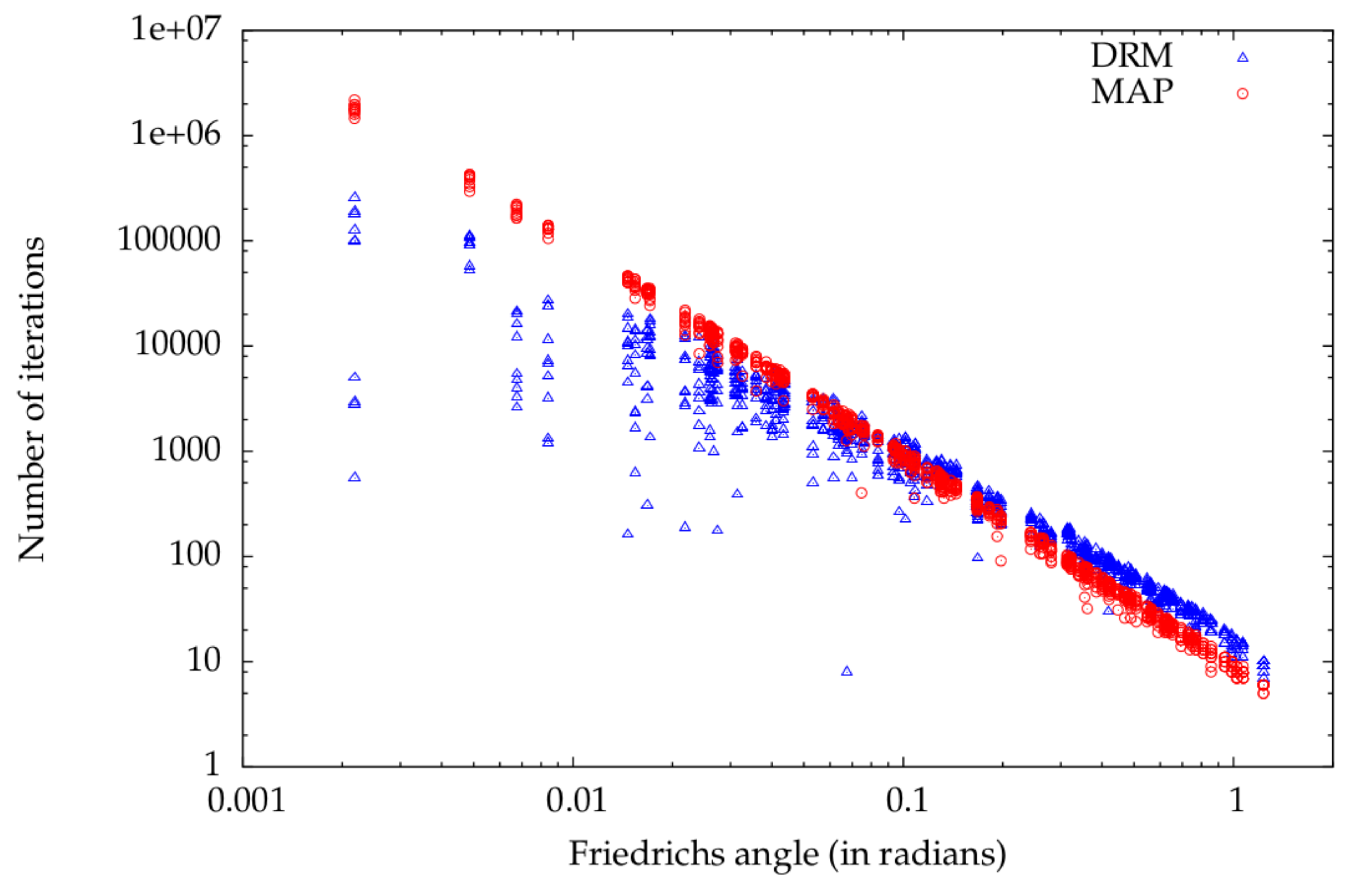}
\caption{True error criterion}
\label{f:TAll}
\end{figure}
\begin{figure}[!h]
\hspace*{0.1in}
\includegraphics[width=5.8in]{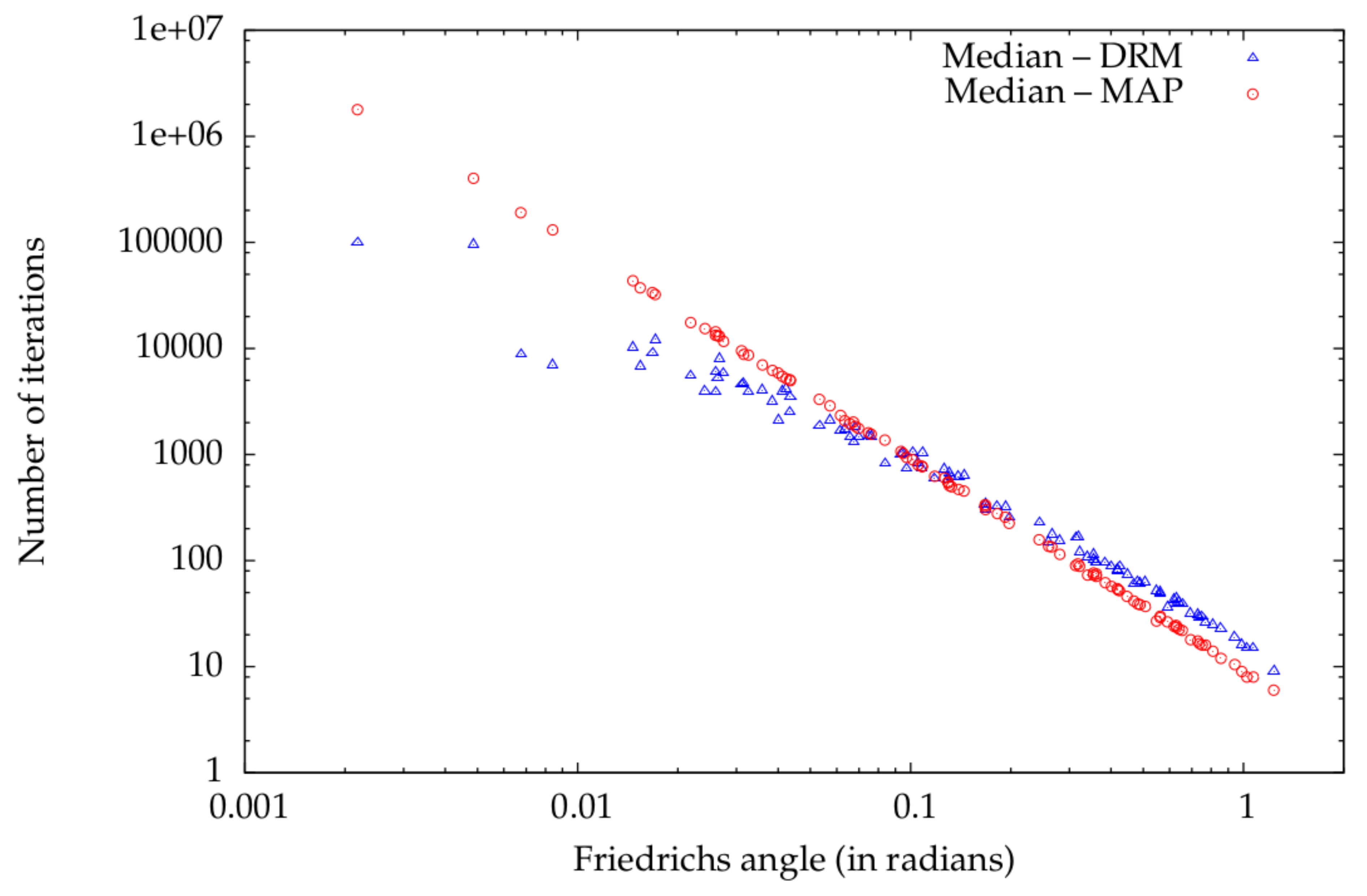}
\caption{True error criterion}
\label{f:TMed}
\end{figure}

In Figure~\ref{f:TAll} and Figure~\ref{f:TMed}, the horizontal axis
represents the Friedrichs angle between the subspaces and the vertical axis
represents the number of iterations. Results for all 1,000 runs are
presented in Figure~\ref{f:TAll}, while we show the {\em median}
in Figure~\ref{f:TMed}.

From the figures, we see that DRM is generally faster 
than MAP 
when the Friedrichs angle $\theta<0.1$. 
In the opposite case, MAP is faster.
This can be interpreted as follows. Since DRM converges with
linear rate $c_F = \cos(\theta)$ while MAP does with rate $c_F^2$, we expect that
MAP performs better when $c_F$ is small, i.e., $\theta$ is large. 
But when the Friedrichs angle is small, the ``rippling'' behaviour of DRM
appears to manifest itself (see also Figure~\ref{f1}).
Note that MAP is not much faster than DRM, which suggests DRM as the better
overall choice.

\subsection{Stopping criterion based on individual distances}

In practice, it is not always possible to obtain the true error. 
Thus, we utilized a reasonable alternative stopping criterion, namely when
the monitored sequence $(z_n)_\nnn$ satisfies
\begin{equation}
\max\big\{d_U(z_n),d_V(z_n)\big\} < \ve
\end{equation}
for the first time. 

\begin{figure}[!h]
\hspace*{0.1in}
\includegraphics[width=5.8in]{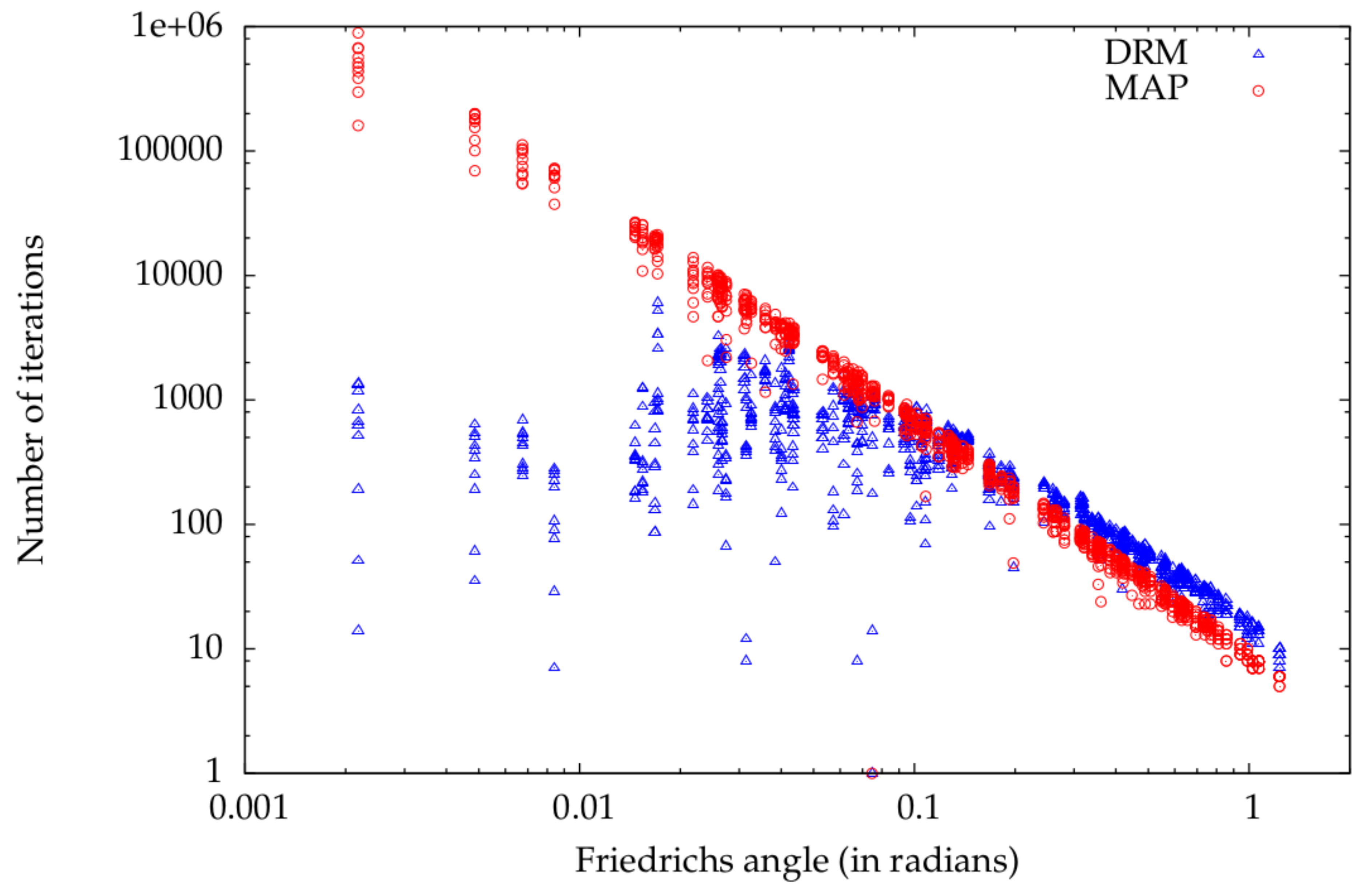}
\caption{Max distance criterion}
\label{f:MAll}
\end{figure}

\begin{figure}[!h]
\hspace*{0.1in}
\includegraphics[width=5.8in]{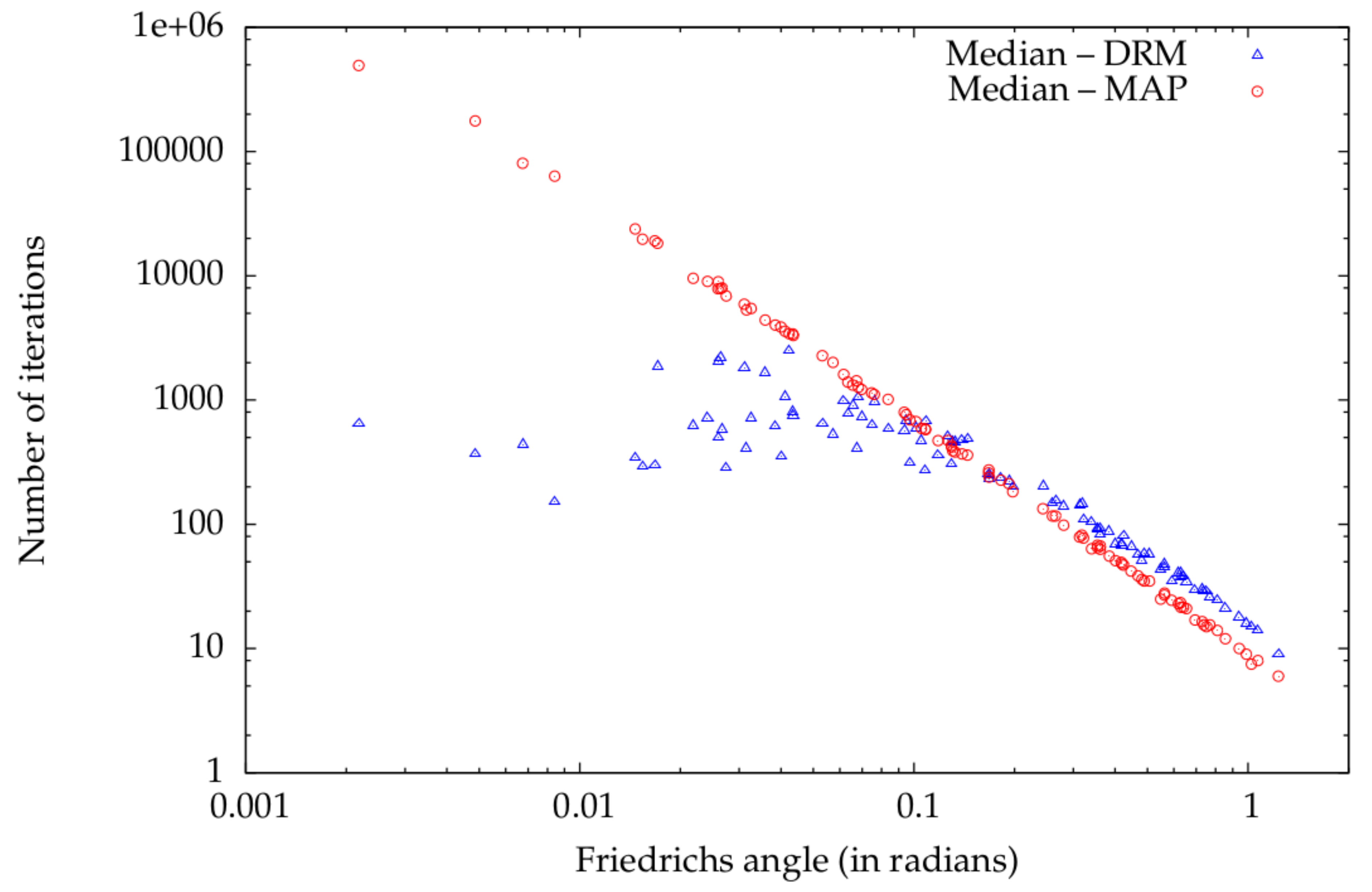}
\caption{Max distance criterion}
\label{f:MMed}
\end{figure}

Figures~\ref{f:MAll} and \ref{f:MMed} show the results 
when we use the max distance criterion with the same data.
The behaviour is similar to the experiments with the true error criterion. 

The figures in this section were computed with the help of 
\texttt{Julia} (see \cite{Julia}) and \texttt{Gnuplot} (see
\cite{Gnuplot}). 

\section{Conclusion}

\label{s:conclusion} 

We completely analyzed the Douglas--Rachford splitting method for the
important case of two subspaces. We determined the limit and the sharp rate
of convergence. Lack of linear convergence was illustrated by an example in
$\ell_2$. Finally, we compared this method to the method of alternating
projections and found the Douglas--Rachford method to be faster when the
Friedrichs angle between the subspaces is small.

\section*{Acknowledgments}

HHB was partially supported by a Discovery Grant and an Accelerator
Supplement of the Natural Sciences and Engineering
Research Council of Canada (NSERC) and by the Canada Research Chair Program.
JYBC was partially supported by CNPq and by projects UNIVERSAL and CAPES-MES-CUBA 226/2012. TTAN was partially supported by a postdoctoral 
fellowship of the Pacific Institute for the Mathematical Sciences
and by NSERC grants of HHB and XW. 
HMP was partially supported by NSERC grants of HHB and XW. XW was partially supported by a Discovery Grant of NSERC.

 \small

\end{document}